\setlist[itemize,3]{leftmargin=-.5in}
\theoremstyle{plain}
\newtheorem{thm}{Theorem}[section]
\newtheorem{theorem}[thm]{Theorem}
\newtheorem{lemma}[thm]{Lemma}
\newtheorem{proposition}[thm]{Proposition}
\newtheorem{corollary}[thm]{Corollary}
\newcommand\beginproof[1]{\trivlist\item[\hskip\labelsep{\em #1.}]}
\renewcommand\proof{\beginproof{Proof}}
\newcommand\proofof[1]{\beginproof{Proof of #1}}
\def\endproof{\hspace*{\fill}\endproofsymbol\endtrivlist}
\def\endproofsymbol{\frame{\rule[0pt]{0pt}{6pt}\rule[0pt]{6pt}{0pt}}}
\theoremstyle{definition}
\newtheorem{definition}[thm]{Definition}
\newtheorem{remark}[thm]{Remark}
\newtheorem{example}[thm]{Example}
\newtheorem{thevarthm}[thm]{\varthmname}
\newenvironment{varthm*}[1]{\trivlist\item[]{\bf #1.}\it}{\endtrivlist}
\renewcommand\geq{\geqslant}
\renewcommand\le{\leqslant}
\renewcommand\leq{\leqslant}
\newcommand\be{\begin{eqnarray*}}
\newcommand\ee{\end{eqnarray*}}
\renewcommand\P{\mathbb P}
\newcommand\newop[2]{\def#1{\mathop{\rm #2}\nolimits}}
\newop\log{log}
\newop\ord{ord}
\newop\Gal{Gal}
\newop\SL{SL}
\newop\GL{GL}
\newop\Bl{Bl}
\newop\mult{mult}
\newop\mass{mass}
\newop\div{div}
\newop\codim{codim}
\newop\sing{sing}
\newop\vdim{vdim}
\newop\edim{edim}
\newop\Ass{Ass}
\newop\size{size}
\newop\reg{reg}
\newop\areg{areg}
\newop\asreg{asreg}
\newop\satdeg{satdeg}
\newop\supp{supp}
\newop\gin{gin}
\newop\ini{in}
\newop\vol{vol}
\newop\sat{sat}
\newop\length{length}
\newop\depth{depth}
\newop\characteristic{char}
\newop\ass{Ass}
\newop\Sh{Sh}
\def\keywordname{{\bfseries Keywords}}%
\def\keywords#1{\par\addvspace\medskipamount{\rightskip=0pt plus1cm
\def\and{\ifhmode\unskip\nobreak\fi\ $\cdot$
}\noindent\keywordname\enspace\ignorespaces#1\par}}
\def\subclassname{{\bfseries Mathematics Subject Classification
(2000)}\enspace}
\def\subclass#1{\par\addvspace\medskipamount{\rightskip=0pt plus1cm
\def\and{\ifhmode\unskip\nobreak\fi\ $\cdot$
}\noindent\subclassname\ignorespaces#1\par}}
\definecolor{qqqqff}{rgb}{0,0,0}
\definecolor{uuuuuu}{rgb}{0,0,0}
\definecolor{zzttqq}{rgb}{0,0,0}
\definecolor{xdxdff}{rgb}{0,0,0}
\begin{document}
\title{On the containment hierarchy for simplicial ideals}
\author{Lampa-Baczy\'{n}ska Magdalena, Malara Grzegorz}

\date{\today}
\maketitle
\thispagestyle{empty}

\begin{abstract}
The purpose of this note is to study containment relations and asymptotic invariants for ideals of fixed codimension skeletons (simplicial ideals) determined by arrangements of $n+1$ general hyperplanes in the $n$--dimensional projective space over an arbitrary field.
  \keywords{symbolic powers, simplicial ideals, resurgence, containment relations}
\end{abstract}

\section{Introduction}
   The last few years have seen a number of exciting developments at the intersection
   of commutative algebra, algebraic geometry and combinatorics. A number of new methods
   and intriguing asymptotic invariants have evolved out of groundbreaking papers
   by Ein, Lazarsfeld and Smith \cite{ELS01} in characteristic zero and Hochster
   and Huneke \cite{HoHu02} in arbitrary characteristic. In particular, star configurations
   have emerged as a natural testing ground for numerous conjectures related to
   the containment relations between symbolic and ordinary powers of ideals, see
   \cite{GHM13} for a very nice introduction to this circle of ideas. In the present
   note we study a special case of star configurations, namely simplices $\Delta(n)$ cut out
   by $n+1$ coordinate hyperplanes $H_0,\ldots,H_n$ in $\P^n$, or more precisely simplicial complexes
   arising by intersecting all possible tuples of these hypersurfaces. Thus
   a codimension $c$ face $F(i_1,\ldots,i_c)$ is the intersection $H_{i_1}\cap\ldots\cap H_{i_c}$
   for distinct $i_1,\ldots,i_c\in\left\{0,\ldots,n\right\}$. Let $I(n,c)$ denote the ideal
   of the union of all codimension $c$ faces of $\Delta(n)$. We call such ideals \emph{simplicial}.
   There are obvious containments
   \begin{equation}\label{eq: basic inclusions}
      I(n,1)\subset I(n,2)\subset\ldots\subset I(n,n-1)\subset I(n,n).
   \end{equation}
   The containment problem for symbolic and usual powers of ideals has been intensively
   studied in recent years, see e.g. \cite{BoHa10a}, \cite{BoHa10b}, \cite{HaHu13}.
   The first counterexample to the $I^{(3)}\subset I^2$ containment for an ideal
   of points in $\P^2$ announced in \cite{DST13} has prompted another series of papers
   \cite{BCH14}, \cite{HaSe13}, \cite{CoEmTaHO}. In all these works the authors study containment
   relations of the type $I^{(m)}\subset I^r$ for \emph{ a fixed} homogeneous ideal $I$.
   Along these lines  we obtain the following result for simplicial ideals.
\begin{varthm*}{Theorem A}
\label{Theorem A}
   For $n\geq 1$ and $c\in\left\{1,\ldots,n\right\}$, there is the containment
   \begin{equation*}
     \text{(i)}\qquad I^{(m)}(n,c)\subset I^r(n,c)
   \end{equation*}
   if and only if
  \begin{equation*}
     \text{(ii)}\qquad  r\leq \frac{(n+1)k-p}{n-c+2},
 \end{equation*}
  where $m=kc - p$ and $0 \leq p < c$.
\end{varthm*}
   However our approach is a little bit more general. It is motivated by the hierarchy
   established in \eqref{eq: basic inclusions}. Thus we extend the containment problem
   to inclusion relations between symbolic powers of \emph{various} simplicial ideals.
   Our main result in this direction is the following.
\begin{varthm*}{Theorem B}
\label{Theorem B}
   Let $n$ be a positive integer and let $c,d\in\left\{1,\ldots,n\right\}$. If $c\leq d$ and $s\cdot c\leq m\cdot d$ then there is the containment
   \begin{equation*}
      I^{(m)}(n,c)\subset I^{(s)}(n,d).
   \end{equation*}
\end{varthm*}
   Bocci and Harbourne introduced in \cite{BoHa10b} an interesting
   invariant, the resurgence $\rho(I)$ measuring in effect the
   asymptotic discrepancy between symbolic and ordinary powers of
   a given ideal (see Definition \ref{resurgence}). This is a delicate invariant and the family of
   ideals for which it is known is growing slowly, see e.g. \cite{DHNSST14}.
   Here we expand this knowledge a little bit.
\begin{varthm*}{Theorem C}
\label{Theorem C}
   For a positive integer $n$ and $c\in\left\{1,\ldots,n\right\}$ there is
   \begin{equation*}
      \rho(I(n,c))=\frac{c(n-c+2)}{n+1}.
   \end{equation*}
\end{varthm*}
   Note that the $\geq $ inequality was established in \cite[Theorem 2.4.3 b]{BoHa10a} and the case $I(n,n)$ was computed in \cite[Theorem 2.4.3 a]{BoHa10a}.

\section{Preliminaries}

In this section we recall basic definitions and introduce some notation. We work over an arbitrary field $\mathbb{K}$. Let $S(n)=
\mathbb{K}[x_0,\ldots,x_{n}]$ be the ring of polynomials over $\mathbb{K}$.

\begin{definition}
Let $I \subseteq S(n)$ be a homogeneous ideal and let $m \geq 1$ be a positive integer. The $m$-th symbolic power of $I$ is

\begin{displaymath}
I^{(m)}= S(n) \cap \Big( \bigcap_{Q \in \ass(I)} I^{m}_{Q} \Big),
\end{displaymath}
where the intersection takes place in the field of fractions of $S(n)$.
\end{definition}

Although symbolic powers are defined algebraically, they have a nice geometrical interpretation due to the following result of Zariski and
Nagata (see \cite{Eis},Theorem 3.14  and \cite{SiSu09}, Corollary 2.9):

\begin{theorem}
(Nagata-Zariski)
\label{Na-Za}
\noindent
Let $I\subseteq S(n)$ be a radical ideal and let $V$ be the set of zeroes of $I$. Then $I^{(m)}$ consists of all polynomials vanishing to
order at least $m$ along $V$.
\end{theorem}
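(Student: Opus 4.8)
The plan is to reduce the statement to the case of a single prime and then run a local analysis at smooth points. Since $I$ is radical it has no embedded components, so $\Ass(I)$ is exactly the set of minimal primes $\mathfrak p_1,\dots,\mathfrak p_t$ with $I=\bigcap_i\mathfrak p_i$, and these correspond to the irreducible components $V_i=V(\mathfrak p_i)$ of $V$. Unwinding the definition gives
\[
   I^{(m)}=\bigcap_i\bigl(S(n)\cap\mathfrak p_i^{\,m}\,(S(n))_{\mathfrak p_i}\bigr)=\bigcap_i\mathfrak p_i^{(m)},
\]
while geometrically a polynomial vanishes to order $\ge m$ along $V$ exactly when it does so along each $V_i$. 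Hence it suffices to prove, for a single prime $\mathfrak p$ with $W=V(\mathfrak p)$, the identity
\[
   \mathfrak p^{(m)}=\bigcap_{\mathfrak m\supseteq\mathfrak p}\mathfrak m^{\,m},
\]
the intersection running over the maximal ideals $\mathfrak m$ (the closed points of $W$), where $f\in\mathfrak m^{\,m}$ encodes vanishing to order $\ge m$ at that point. After base changing to $\overline{\mathbb K}$ (which, for the geometric reading of $V$, we may assume done from the outset) the closed points of $W$ are dense, a feature used repeatedly below.

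For the inclusion $\subseteq$ I would first work on the smooth locus $W^{\mathrm{sm}}$, which is open and dense in the irreducible $W$. At a closed point $P\in W^{\mathrm{sm}}$ the local ring $(S(n))_{\mathfrak m_P}$ is regular and $\mathfrak p$ is generated there by part of a regular system of parameters $x_1,\dots,x_c$ with $c=\codim W$. Since such a prime is a local complete intersection, $\mathfrak p^{\,m}$ is $\mathfrak p$-primary locally, so $\mathfrak p^{(m)}(S(n))_{\mathfrak m_P}=\mathfrak p^{\,m}(S(n))_{\mathfrak m_P}=(x_1,\dots,x_c)^m(S(n))_{\mathfrak m_P}\subseteq\mathfrak m_P^{\,m}$; thus every $f\in\mathfrak p^{(m)}$ lies in $\mathfrak m_P^{\,m}$. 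To conclude, I would note that $f\in\mathfrak m_P^{\,m}$ is equivalent to the vanishing at $P$ of all Hasse--Schmidt derivatives of $f$ of order $<m$, so $\{P: f\in\mathfrak m_P^{\,m}\}$ is Zariski closed; containing the dense set $W^{\mathrm{sm}}$, it must contain all of $W$.

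For the reverse inclusion $\supseteq$, fix a smooth closed point $P_0$ and pass to the completion $\widehat{(S(n))}_{\mathfrak m_{P_0}}\cong\overline{\mathbb K}[[x_1,\dots,x_c,y_1,\dots,y_d]]$, with $\mathfrak p=(x_1,\dots,x_c)$, $d=\dim W$, and $y_1,\dots,y_d$ coordinates along $W$. Expanding $f=\sum_\alpha g_\alpha(y)\,x^\alpha$, membership $f\in\mathfrak p^{\,m}$ is precisely the vanishing of $g_\alpha$ for all $|\alpha|<m$. Now if $f$ vanishes to order $\ge m$ at every nearby closed point $P=(0,y_0)\in W$, then comparing total degrees in the local coordinates centred at $P$ forces $g_\alpha(y_0)=0$ whenever $|\alpha|<m$, since $g_\alpha(y_0)x^\alpha$ would otherwise be a nonzero term of degree $<m$. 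Density of such $y_0$ gives $g_\alpha\equiv0$ for $|\alpha|<m$, hence $f\in\mathfrak p^{\,m}\widehat{(S(n))}_{\mathfrak m_{P_0}}$; faithful flatness of completion and then localization at $\mathfrak p$ yield $f\in\mathfrak p^{\,m}(S(n))_{\mathfrak p}$, that is $f\in\mathfrak p^{(m)}$.

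The hard part is exactly this $\supseteq$ direction: converting the pointwise hypothesis (order $\ge m$ at the closed points of $W$) into the single generic statement $f\in\mathfrak p^{\,m}(S(n))_{\mathfrak p}$, for which the density of smooth closed points and the coefficient bookkeeping above are essential. Two technical points deserve care and must be handled as the cited results of Eisenbud and of Sidman--Sullivant do: in positive characteristic "order $\ge m$" has to be read through Hasse--Schmidt (divided-power) derivatives rather than ordinary partials, so that $f\in\mathfrak m_P^{\,m}$ is genuinely equivalent to the vanishing of those operators; and over a non-closed or imperfect field one should base change to $\overline{\mathbb K}$ to guarantee both enough closed points and regularity of the generic point of $W$.
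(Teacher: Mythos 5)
The paper itself gives no proof of Theorem \ref{Na-Za}; it is quoted from the literature (Eisenbud, Theorem 3.14, and Sidman--Sullivant, Corollary 2.9), so your attempt is measured against those standard proofs. Your reduction to a single prime is correct: since $I$ is radical, $\ass(I)$ consists of the minimal primes $\mathfrak p_i$, each $I S(n)_{\mathfrak p_i}$ equals $\mathfrak p_i S(n)_{\mathfrak p_i}$, and $I^{(m)}=\bigcap_i \mathfrak p_i^{(m)}$. Your $\subseteq$ direction is also sound and is essentially the textbook argument: at a smooth closed point $\mathfrak p$ becomes part of a regular system of parameters, $\mathfrak p^m$ is locally $\mathfrak p$-primary, so $\mathfrak p^{(m)}\subseteq \mathfrak m_P^m$ there; and the locus $\{P: f\in\mathfrak m_P^m\}$ is Zariski closed because it is cut out by the Hasse--Schmidt derivatives of $f$ of order $<m$, so density of $W^{\mathrm{sm}}$ finishes it.

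The genuine gap is in the $\supseteq$ direction. In $\widehat{(S(n))}_{\mathfrak m_{P_0}}\cong\overline{\mathbb K}[[x_1,\dots,x_c,y_1,\dots,y_d]]$ the coefficients $g_\alpha\in\overline{\mathbb K}[[y]]$ are formal power series and cannot be evaluated at a point $y_0\neq 0$: the formal neighborhood of $P_0$ contains no closed point of $W$ other than $P_0$ itself, so the step ``if $f$ vanishes to order $\ge m$ at every nearby closed point $(0,y_0)$ then $g_\alpha(y_0)=0$, and density of such $y_0$ gives $g_\alpha\equiv 0$'' is not a statement about elements of the completion and does not parse as written; comparing ``total degrees in the local coordinates centred at $P$'' likewise has no meaning inside $\widehat{(S(n))}_{\mathfrak m_{P_0}}$. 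The standard repair is precisely the tool you mention only in your closing paragraph but never deploy: from $f\in\mathfrak m_P^m$ for all closed $P\in W$ deduce that every Hasse--Schmidt operator $D$ of order $<m$ satisfies $(Df)(P)=0$, hence $Df\in I(W)=\mathfrak p$ by the Nullstellensatz (this is where radicality enters); then localize at $\mathfrak p$ and use that in the regular local ring $S(n)_{\mathfrak p}$ these operators detect the $\mathfrak p$-adic order of $f$, i.e. $f\notin\mathfrak p^m S(n)_{\mathfrak p}$ forces some $Df\notin\mathfrak p S(n)_{\mathfrak p}$ with $D$ of order $<m$ --- valid in characteristic zero, and in characteristic $p$ exactly when the residue field extension is separable, i.e. when $\mathbb K$ is perfect. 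Relatedly, your opening move of base changing to $\overline{\mathbb K}$ ``from the outset'' is not harmless: for imperfect $\mathbb K$ the extended ideal $\mathfrak p\,\overline{\mathbb K}[x_0,\dots,x_n]$ need not be radical and $\mathfrak p^{(m)}$ need not be the contraction of the extended symbolic power, which is why the references assume (in effect) a perfect field or phrase the result via differential powers. For the paper's application none of this bites, since the simplicial ideals $I(n,c)$ are monomial with components that are coordinate subspaces defined over the prime field.
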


It follows immediately from the above theorem that there are inclusions

\begin{displaymath}
I= I^{(1)}\supseteq I^{(2)}\supseteq  I^{(3)} \supseteq \ldots
\end{displaymath}
Of course the same is true for usual powers

\begin{displaymath}
I= I^{1}\supseteq I^{2}\supseteq  I^{3} \supseteq \ldots
\end{displaymath}

It is natural to wonder for what $m$ and $r$ there are inclusions

\begin{equation}
\label{eq:ab}
\text{a})\;\; I^{r}\subseteq I^{(m)} \qquad \text{ and } \qquad \text{b})\;\; I^{(m)} \subseteq I^{r}
\end{equation}
It is easy to see that there is the inclusion in a) if and only if $m \le r$. More generally it follows from Theorem \ref{Na-Za} that there is always the inclusion

\begin{equation}
\label{symb.incl}
\big( I^{(a)} \big)^{b} \subseteq I^{(ab)}
\end{equation}

\noindent
As for b) Ein, Lazarsfeld and Smith in characteristic zero and
Hochster and Huneke in arbitrary characteristic showed that there is always containment for $m \geq n \cdot r$. Of course, in certain cases (e.g. complete intersections ideals)
this bound is not optimal and the problem has to be studied individually in any given case. It might be worth to mention that in fact it is not known if the bound $m\geq nr$ is ever optimal.

Here we study ideals $I(n,c)$ of codimension $c$ skeletons of the simplex spanned by all coordinate points in $\mathbb{P}^{n}$. More
exactly,
if $H_{i}$ is the hyperplane  $\{ x_{i}=0 \}$ for $i= 0, \ldots, n$, then the set of zeroes of $I(n,c)$ is the union of all $c$-fold
intersections $H_{i_1}\cap \ldots \cap H_{i_{c}}$ for mutually distinct indices $i_1, \ldots, i_{c} \in \{ 0, \ldots,n \}$.

In connection with containment b) in (\ref{eq:ab}) Bocci and Harbourne  introduced in \cite{BoHa10a} the following quantity.

\begin{definition}
\label{resurgence}
Let $I\subseteq S(n)$ be a non-trivial (i.e.  $I \neq \langle 0\rangle$ and $I \neq \langle 1\rangle$) homogeneous ideal. The resurgence of $I$ is the real number
\begin{displaymath}
\rho(I):= sup \Big\{ \frac{m}{r}: I^{(m)} \nsubseteq I^{r} \Big\}.
\end{displaymath}
\end{definition}

\noindent This invariant is of interest as it guarantees the containment
\begin{displaymath}
I^{(m)}\subseteq I^{r}
\end{displaymath}
for $\frac{m}{r}> \rho(I)$.

\subsection{Monomial ideals}

We identify monomials $x^a=x^{a_0}\cdot\ldots\cdot x_n^{a_n} \in S(n)$ with vectors $(a_0,\ldots ,a_n)\in\mathbb{R}^{n+1}$ in the usual way.

Our approach to Theorems A, B and C relies heavily on the fact that the ideals $I(n,c)$ are Stanley--Reisner ideals, i.e. ideals generated
by square free monomials. More exactly we have the following fact.

\begin{lemma}
\label{lem:inc_gen}
The ideal $I(n,c)$ is generated by all monomials of the form
\begin{displaymath}
x_{i_1} \cdot \ldots \cdot x_{i_{n+2-c}}
\end{displaymath}
for mutually distinct $i_1, \ldots i_{n+2-c} \in \{0,\ldots, n \}$.
\end{lemma}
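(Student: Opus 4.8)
The plan is to realise $I(n,c)$ as an intersection of coordinate prime ideals and then reduce the assertion to a purely combinatorial statement about supports of monomials. Each codimension $c$ face $F(i_1,\ldots,i_c)=H_{i_1}\cap\ldots\cap H_{i_c}$ is the linear subspace cut out by the prime ideal $P_S=\langle x_i : i\in S\rangle$, where $S=\{i_1,\ldots,i_c\}$. Since the ideal of a union of varieties is the intersection of their ideals, and each $P_S$ is already radical, I would begin by writing
\begin{displaymath}
I(n,c)=\bigcap_{S\subseteq\{0,\ldots,n\},\,|S|=c} P_S .
\end{displaymath}
An intersection of monomial ideals is again a monomial ideal, so it suffices to decide which monomials $x^a=x_0^{a_0}\cdots x_n^{a_n}$ lie in every $P_S$.

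Next I would carry out the membership computation. A monomial $x^a$ lies in $P_S$ precisely when $a_i\geq 1$ for some $i\in S$, i.e. when the support $T=\supp(x^a)$ meets $S$. Hence $x^a$ belongs to the intersection if and only if $T$ meets every $c$-element subset of $\{0,\ldots,n\}$. The support $T$ fails to meet some $c$-subset exactly when the complement $\{0,\ldots,n\}\setminus T$ contains a $c$-subset, that is when $(n+1)-|T|\geq c$; so $x^a$ lies in $I(n,c)$ if and only if $(n+1)-|T|<c$, equivalently $|T|\geq n+2-c$. Thus $I(n,c)$ consists precisely of those monomials whose support has at least $n+2-c$ elements.

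Finally I would extract the minimal generators. Every monomial whose support has size at least $n+2-c$ is divisible by a squarefree monomial of degree exactly $n+2-c$, namely the product of any $n+2-c$ of the variables appearing in it; conversely every multiple of such a squarefree monomial has support of size at least $n+2-c$. Therefore the squarefree monomials $x_{i_1}\cdots x_{i_{n+2-c}}$ with distinct indices generate $I(n,c)$, as claimed. The only delicate point is the combinatorial equivalence in the second step, translating ``the support meets every $c$-subset'' into the clean numerical bound $|T|\geq n+2-c$; the reduction to monomial ideals and the identification of the minimal generators are then routine.
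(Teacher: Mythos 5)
Your proof is correct. There is in fact nothing in the paper to compare it with: the authors state Lemma \ref{lem:inc_gen} without any proof, so your argument supplies a justification that the paper simply omits. The route you take is the natural one for Stanley--Reisner ideals, and each step checks out: $I(n,c)=\bigcap_{|S|=c}P_S$ with $P_S=\langle x_i : i\in S\rangle$; a monomial lies in the monomial ideal $P_S$ iff its support meets $S$; the support $T$ meets every $c$-subset iff the complement of $T$ contains no $c$-subset, i.e. $(n+1)-|T|<c$, i.e. $|T|\geq n+2-c$; and the monomials whose support has size at least $n+2-c$ are exactly the multiples of the squarefree monomials $x_{i_1}\cdots x_{i_{n+2-c}}$. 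Your characterization also agrees with every instance computed in Sections \ref{triangle} and \ref{tetra}, e.g. $I(2,1)=\langle x_0x_1x_2\rangle$ and $I(3,2)=\langle x_0x_1x_2, x_1x_2x_3, x_0x_2x_3, x_0x_1x_3\rangle$. One small foundational remark: since the paper works over an arbitrary field $\mathbb{K}$, the phrase ``the ideal of a union of varieties is the intersection of their ideals'' should be read scheme-theoretically, that is, one should take $I(n,c):=\bigcap_S P_S$ as the definition; over a finite field the vanishing ideal of the set of $\mathbb{K}$-rational points of a coordinate subspace can be strictly larger than $P_S$, so an argument literally in terms of vanishing ideals of point sets would fail there. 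With that standard (and clearly intended) reading, your first display is the definition itself and the remainder of your proof is a complete and correct combinatorial verification.
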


\noindent The symbolic powers of simplicial ideals are also monomial ideals.

\begin{proposition}
\label{symbolic generators}
Let $m$ be a positive integer. For a monomial $x^a=x_0^{a_0} \cdot \ldots \cdot x_{n}^{a_{n}}$ the following conditions are equivalent:
 \begin{itemize}
  \item [a)] $x^a \in I^{(m)}(n,c)$;
  \item [b)] for all $c$-tuples of mutually distinct indices $i_1 , \ldots i_{c}  \in \{0, \ldots, n \}$
  \end{itemize}
 \begin{equation}\label{eq:i1ic}
 a_{i_1} + \ldots + a_{i_{c}} \geq m.
 \end{equation}
\end{proposition}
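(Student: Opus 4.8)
The plan is to reduce the question about the monomial ideal $I^{(m)}(n,c)$ to a statement about a single coordinate prime and then intersect. First I would record the primary decomposition of $I(n,c)$. The zero set $V$ is the union of the coordinate linear spaces $F(i_1,\ldots,i_c)=\{x_{i_1}=\cdots=x_{i_c}=0\}$, and each such space is cut out by the prime $P_{i_1,\ldots,i_c}=\langle x_{i_1},\ldots,x_{i_c}\rangle$. Hence the radical ideal $I(n,c)$ is the intersection of these primes over all $c$-element index sets. Because $I(n,c)$ is radical it has no embedded components, so its associated primes are exactly the minimal ones, i.e. $\ass(I(n,c))=\{P_{i_1,\ldots,i_c}\}$.

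Next I would compute the symbolic power directly from the definition. Localizing at a minimal prime $P=P_{i_1,\ldots,i_c}$ turns every other coordinate prime into the unit ideal (no two distinct $c$-subsets are nested), so $I(n,c)_P=P_P$ and therefore $I(n,c)_P^m=(P^m)_P$. Plugging this into the definition and distributing the intersection with $S(n)$ gives
\[
I^{(m)}(n,c)=S(n)\cap\bigcap_{P}(P^m)_P=\bigcap_{P}P^{(m)}.
\]
The crucial simplification is that each $P=\langle x_{i_1},\ldots,x_{i_c}\rangle$ is generated by a subset of the variables, hence is a complete intersection, and for such primes the ordinary power $P^m$ is already $P$-primary, so $P^{(m)}=P^m$. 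This yields $I^{(m)}(n,c)=\bigcap_{\{i_1,\ldots,i_c\}}\langle x_{i_1},\ldots,x_{i_c}\rangle^m$.

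It then remains to characterize monomial membership in a single power. The ideal $\langle x_{i_1},\ldots,x_{i_c}\rangle^m$ is generated by the monomials $x_{i_1}^{b_1}\cdots x_{i_c}^{b_c}$ with $b_1+\cdots+b_c=m$, and a monomial $x^a$ is divisible by one of these generators precisely when a total exponent $m$ can be extracted from the coordinates $i_1,\ldots,i_c$, that is, exactly when $a_{i_1}+\cdots+a_{i_c}\geq m$. Taking the intersection over all $c$-subsets converts this into condition (b), establishing both implications at once.

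The step that demands the most care is the second one: justifying that the symbolic power distributes over the intersection of the minimal primes and that symbolic and ordinary powers coincide for each coordinate prime. The distribution is the standard localization computation, and it genuinely uses that the $P_{i_1,\ldots,i_c}$ are all of the associated primes, so there are no embedded components to spoil it; the coincidence $P^{(m)}=P^m$ is the complete-intersection fact for ideals generated by variables. A more geometric route avoids this algebra by invoking the Zariski--Nagata theorem directly: $x^a\in I^{(m)}(n,c)$ if and only if $x^a$ vanishes to order at least $m$ along every face $F(i_1,\ldots,i_c)$, and since in coordinates transverse to such a face the remaining variables are units, the order of vanishing of $x^a$ along $\{x_{i_1}=\cdots=x_{i_c}=0\}$ is exactly $a_{i_1}+\cdots+a_{i_c}$, giving the same characterization.
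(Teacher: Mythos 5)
Your proof is correct, and it takes a genuinely different route from the paper's. The paper argues geometrically: it invokes Theorem 3.1 of \cite{GHM13} to know that $I^{(m)}(n,c)$ is a monomial ideal, proves a) $\Rightarrow$ b) via the Zariski--Nagata description (all derivatives of order $m-1$ of $x^a$ vanish along the codimension-$c$ faces), and proves b) $\Rightarrow$ a) by exploiting the permutation symmetry of condition \eqref{eq:i1ic} to reduce to the single face $\{x_n=\ldots=x_{n-c+1}=0\}$, where divisibility by $x_n^{a_n}\cdots x_{n-c+1}^{a_{n-c+1}}$ finishes the argument. You instead work purely algebraically from the definition of symbolic power: you identify $\ass(I(n,c))$ with the coordinate primes $P_S=\langle x_i : i\in S\rangle$ for $c$-element subsets $S$ (using that a radical ideal has no embedded primes and that distinct $c$-subsets give incomparable primes), compute by localization that $I^{(m)}(n,c)=\bigcap_S P_S^{(m)}$, use the complete-intersection fact $P_S^{(m)}=P_S^m$, and then read off monomial membership in each $P_S^m$ as $\sum_{i\in S}a_i\geq m$, which gives both implications simultaneously. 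Your route buys two things: it is self-contained, reproving rather than citing the relevant special case of the star-configuration decomposition from \cite{GHM13}, and it is visibly characteristic-free, which matters because the paper works over an arbitrary field $\mathbb{K}$ and the derivative formulation of Zariski--Nagata that the paper leans on is delicate in positive characteristic (one should use divided-power/Hasse differential operators there); the localization computation sidesteps this entirely. What the paper's argument buys is brevity and geometric transparency, plus the pleasant symmetry shortcut that lets it check only one face --- a step your intersection over all $S$ renders unnecessary. Your closing remark correctly identifies the paper's geometric alternative, modulo the characteristic caveat just mentioned.
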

\begin{proof}
Based on (\cite{GHM13}, Theorem 3.1.) we claim that $I^{(m)}(n,c)$ is a monomial ideal. Assume that $x^{a} \in I^{(m)}(n,c)$. This means that on all faces of codimension $c$ any derivative of $x^{a}$ of order $m-1$ is equal to zero which
enforces for all $c$-tuples of exponents the condition (\ref{eq:i1ic}).

On the other hand suppose that  $x^{a}$ is a monomial such that (\ref{eq:i1ic}) holds. Since (\ref{eq:i1ic}) is invariant under permutation of variables, it is enough to check that $x^a$ vanishes to order $\geq m$ along one of the codimension $c$ faces. To this end let $F$ be defined by
equations $x_n=x_{n-1}=\ldots=x_{n-c+1}=0$.
For $j_1=n,j_2=n-1,\ldots,j_c=n-c+1$ the monomial $x^{a}$ is divisible by a monomial of degree greater or equal to $m$ in variables
$x_n,\ldots,x_{n-c+1}$, namely by $x_{n}^{a_{n}}\cdot \ldots \cdot x_{n-c+1}^{a_{n-c+1}}$, hence it is in the $m$-th symbolic power of $I(F)$.
\end{proof}

   We can similarly characterize usual powers of simplicial ideals.
\begin{proposition}\label{prop:usual powers}
   Let $r$ be a positive integer. Let
   $x^a=x_0^{a_0}\cdot\ldots\cdot x_n^{a_n}$ be a monomial in $S(n)$.
   The following conditions are equivalent.
   \begin{itemize}
   \item[a)] $x^a$ is in a minimal set of generators of $I^r(n,c)$;
   \item[b)]
   \subitem i) $\sum_{i=0}^n a_i=(n-c+2)r$;
   \subitem ii) $a_i\leq r$ for all $0\leq i\leq n$.
   \end{itemize}
\end{proposition}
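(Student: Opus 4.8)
The plan is to characterize the minimal monomial generators of $I^r(n,c)$ directly from the generators of $I(n,c)$ given in Lemma~\ref{lem:inc_gen}. Since $I(n,c)$ is generated by the squarefree monomials of degree $n-c+2$ supported on $(n+2-c)$-subsets of the variables, the power $I^r(n,c)$ is generated by all $r$-fold products of such generators. Any element of the minimal generating set of $I^r$ is therefore of the form $\prod_{t=1}^{r} x_{i_1^{(t)}}\cdots x_{i_{n+2-c}^{(t)}}$, a product of $r$ squarefree monomials each of degree $n-c+2$. I would first record the immediate consequence for the total degree: each factor contributes exactly $n-c+2$ to $\deg$, so every such product has degree $(n-c+2)r$, which is condition (i). The real content of the proposition is to show that the constraint (ii), $a_i\le r$ for all $i$, exactly cuts out the minimal generators among all monomials satisfying (i).

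\medskip
\noindent
For the forward direction (a)$\Rightarrow$(b), I would argue that if $x^a$ is a minimal generator then it is a product of $r$ of the squarefree degree-$(n-c+2)$ generators. Since each such factor is squarefree, each variable $x_i$ appears in any single factor with exponent at most $1$; summing over the $r$ factors gives $a_i\le r$, which is (ii), and summing all exponents gives (i). For the reverse direction (b)$\Rightarrow$(a), I would take a monomial $x^a$ with $\sum a_i=(n-c+2)r$ and all $a_i\le r$, and show it lies in $I^r(n,c)$ by exhibiting it as a product of $r$ generators; then argue minimality. The natural tool here is a greedy/combinatorial decomposition: I want to write the exponent vector $a$ as a sum $a=\sum_{t=1}^{r} b^{(t)}$ of $r$ zero-one vectors $b^{(t)}$, each having exactly $n+2-c$ ones (so each $b^{(t)}$ corresponds to a generator of $I(n,c)$). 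This is precisely the statement that an integer vector with entries bounded by $r$ and coordinate sum equal to $r$ times the per-slice count $n+2-c$ can be split into $r$ pieces each a $\{0,1\}$-vector with the prescribed number of ones.

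\medskip
\noindent
The combinatorial core — decomposing $a$ into $r$ admissible squarefree pieces — is where I expect the main obstacle to lie, and it is really a Birkhoff--von~Neumann / Hall-type statement. One clean way to see it: form an $(n+1)\times r$ zero-one matrix whose $i$-th row contains exactly $a_i$ ones, arranged so that every column receives exactly $n+2-c$ ones; the columns are then the desired vectors $b^{(t)}$. Such a filling exists precisely because the row sums $a_i$ satisfy $0\le a_i\le r$ and the total $\sum a_i=(n+2-c)r$ matches the prescribed constant column sum $n+2-c$ across $r$ columns — this is the Gale--Ryser condition for a $0$--$1$ matrix with given row and column sums, which is automatic here since the column sums are constant and the row bound $a_i\le r$ holds. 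I would fill the matrix greedily, placing the ones of the densest rows first, and verify feasibility reduces to the bound $a_i\le r$. Finally, for minimality I would note that if $a_i\le r$ fails, i.e. some $a_i\ge r+1$, then $x^a$ is divisible by $x_i^{r+1}$; but any product of $r$ squarefree generators has $a_i\le r$, so such a monomial, while it may lie in $I^r$, is strictly divisible by one satisfying (i)--(ii) and hence is not a minimal generator. Conversely a monomial satisfying (i) and (ii) cannot be properly divisible by another generator of the same degree, since they share the degree $(n-c+2)r$, giving minimality. The delicate bookkeeping is entirely in the matrix-filling step; the degree and divisibility arguments are routine.
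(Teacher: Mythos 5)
Your proposal is correct, and it differs from the paper's argument mainly in packaging rather than substance. The paper proves (b)$\Rightarrow$(a) by a self-contained induction on $r$: order the exponents decreasingly, observe that (i) and (ii) force $a_{n+1-c}\geq 1$ (otherwise the total degree would be at most $(n-c+1)r$), divide $x^a$ by the generator $g=x_0\cdots x_{n+1-c}$, and check that the residual exponent vector again satisfies (i) and (ii) with $r$ replaced by $r-1$. That induction is precisely your greedy matrix-filling executed one column at a time, with the feasibility invariant verified explicitly --- so the combinatorial core is identical, but you outsource it to Gale--Ryser while the paper reproves the needed special case from scratch. Your reduction is legitimate: with constant column sums $n+2-c$, the dominance condition reads $\sum_{i=0}^{k-1}a_i\leq r\min(k,\,n+2-c)$ for all $k$ (exponents sorted decreasingly), which follows from $a_i\leq r$ when $k\leq n+2-c$ and from the total-degree identity when $k\geq n+2-c$; you assert this is ``automatic'' and it is, though a complete write-up should display this two-case check rather than wave at it. What each approach buys: the paper's induction is elementary and citation-free, while yours makes visible that the statement is an instance of a classical realizability theorem for $0$--$1$ matrices with prescribed margins, which explains why the bound $a_i\leq r$ is exactly the right hypothesis and would generalize to other constant-column-sum situations. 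One point where you are actually more careful than the paper: the paper's proof stops at membership $x^a\in I^r(n,c)$, leaving implicit that a monomial of degree exactly $(n-c+2)r$ lying in an ideal generated entirely in that degree is automatically a minimal generator; your closing degree argument spells this out.
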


\proof
   The implication from a) to b) is obvious in the view of Lemma \ref{lem:inc_gen}.

   For the reverse implication, we proceed by induction. The case $r=1$ follows
   again from Lemma \ref{lem:inc_gen}. Suppose thus that the statement is proved
   for all integers up to $r-1$. Let $x^a$ be a monomial satisfying b).

   Renumbering the variables if necessary, we may assume that the powers
   $a_0,\ldots,a_n$ are ordered
   \begin{equation}
      a_0\geq a_1\geq\ldots\geq a_{n+1-c}\geq\ldots\geq a_n.
   \end{equation}
   The conditions b.i) and b.ii) imply that $a_{n+1-c}\geq 1$. Hence $x^a$
   is divisible by the generator $g=x_0\cdot\ldots\cdot x_{n+1-c}\in I(n,c)$.
   Let
   $$a_i':=\left\{\begin{array}{ccc}
   a_i-1 & \mbox{ for } & 0\leq i\leq n+1-c\\
   a_i & \mbox{ for } & i\geq n+2-c
   \end{array}\right.$$
   and let $x^{a'}=x_0^{a_0'}\cdot\ldots\cdot a_n^{a_n'}$. Note that $a_0',\ldots,a_n'$ satisfy conditions b.i) and b.ii) with $r$ replaced by $r-1$. Hence the induction assumption implies that $x^{a'}$ is an element of $I^{r-1}(n,c)$ and thus
   $x^a=g\cdot x^{a'}\in I^r(n,c)$. And we are done.
\endproof

In the sequel we need the following result

\begin{corollary}
\label{i+ii}
For a monomial $x^a \in I^r(n,c)$, we have
\begin{equation*}
a_{i_1}+\ldots+a_{i_c}\geq r
\end{equation*}
for all $c$--tuples of mutually
   distinct indices $i_1,\ldots,i_c\in\left\{0,\ldots,n\right\}$.

\end{corollary}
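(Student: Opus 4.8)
The plan is to reduce the statement to the case of minimal generators and then to conclude by an elementary degree count based on the characterization in Proposition~\ref{prop:usual powers}.

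First I would use the fact that $I^r(n,c)$ is a monomial ideal, so any monomial $x^a$ lying in it is divisible by at least one minimal monomial generator $x^b$; equivalently $a_i \geq b_i$ for every $i$. Consequently $a_{i_1}+\ldots+a_{i_c} \geq b_{i_1}+\ldots+b_{i_c}$ for every $c$-tuple of indices, and it therefore suffices to prove the asserted inequality for a minimal generator $x^b$.

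Next, for such a minimal generator, Proposition~\ref{prop:usual powers} supplies the two constraints $\sum_{i=0}^n b_i = (n-c+2)r$ and $b_i \leq r$ for all $0\leq i\leq n$. I would then pass to the complementary set of indices: fix a $c$-tuple $i_1,\ldots,i_c$ and let the remaining $n+1-c$ indices be $j_1,\ldots,j_{n+1-c}$. Since each $b_{j_\ell}\leq r$, the complementary exponents sum to at most $(n+1-c)r$, whence
\begin{equation*}
 b_{i_1}+\ldots+b_{i_c} = (n-c+2)r - \sum_{\ell=1}^{n+1-c} b_{j_\ell} \geq (n-c+2)r - (n+1-c)r = r,
\end{equation*}
which is exactly the desired bound, and transporting it back along $a_i\geq b_i$ finishes the proof.

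There is no serious obstacle here: the entire content is the counting identity displayed above, and the only point deserving a word of justification is the reduction to minimal generators, which rests on the standard fact that every monomial in a monomial ideal is a multiple of one of its minimal generators. The one thing to watch is that the bound $b_i\leq r$ must be applied to all $n+1-c$ complementary slots simultaneously, so that the two quantities $(n-c+2)r$ and $(n+1-c)r$ combine to leave precisely $r$.
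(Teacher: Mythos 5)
Your proof is correct, but it takes a genuinely different route from the paper's. The paper disposes of this corollary in one line: since $I^r \subseteq I^{(r)}$ holds for any (radical) ideal, membership $x^a \in I^r(n,c)$ gives $x^a \in I^{(r)}(n,c)$, and the inequalities are then exactly the characterization of symbolic powers in Proposition~\ref{symbolic generators}. You instead stay entirely on the side of ordinary powers: you reduce to a minimal monomial generator $x^b$ (legitimate, since $I^r(n,c)$ is a monomial ideal and every monomial in a monomial ideal is divisible by a minimal generator), invoke Proposition~\ref{prop:usual powers} to get $\sum_i b_i = (n-c+2)r$ and $b_i \leq r$, and then the complementary count $(n-c+2)r - (n+1-c)r = r$ yields the bound, which lifts back to $x^a$ via $a_i \geq b_i$. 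Both arguments are sound and non-circular (Proposition~\ref{prop:usual powers} is proved by induction from Lemma~\ref{lem:inc_gen}, independently of this corollary). The paper's proof is shorter and leans on general theory; yours is more elementary and self-contained, and in fact, combined with Proposition~\ref{symbolic generators}, it amounts to a purely combinatorial re-proof of the containment $I^r(n,c) \subseteq I^{(r)}(n,c)$ for simplicial ideals rather than an appeal to it.
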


\begin{proof}
Since there is always the inclusion $I^r \subset I^{(r)}$, the assertion follows from Proposition \ref{symbolic generators}.
\end{proof}

\section{Triangle}
\label{triangle}

Before we pass to proving the general statements, we want to examine the cases $n=2$ and $n=3$ in more detail. In these cases we obtain more containment relations, which suggest that there might be even more regularity also in the general case. We hope to come back to this problem in the near future.

In this section we consider the ideal $E=I(2,1)= \langle x_0x_1x_2\rangle$ and $V=I(2,2)= \langle x_0x_1,x_0x_2,x_1x_2 \rangle$ in $\mathbb{P}^2$.
We introduce the following notation $$P_0=[1:0:0],\; P_1=[0:1:0],\; P_2=[0:0:1].$$

During the investigation of ordinary and symbolic powers we observed the following behavior of these ideals.
\begin{lemma}
\label{E CI}
The ideal $E$ is a complete intersection ideal.
\end{lemma}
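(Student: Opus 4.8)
The plan is to observe that $E$ is principal and then invoke the standard characterization of complete intersections. Recall that a homogeneous ideal $I$ in a polynomial ring is a complete intersection exactly when it can be generated by $\codim(I)$ elements, equivalently by a regular sequence whose length equals its codimension. Here $E=\langle x_0x_1x_2\rangle$ is generated by the single element $f=x_0x_1x_2$, so the task reduces to checking that $\codim(E)=1$ and that $f$ forms a regular sequence of length one.

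First I would record that $S(2)=\mathbb{K}[x_0,x_1,x_2]$ is an integral domain, whence the nonzero element $f$ is a nonzerodivisor; a single nonzerodivisor is by definition a regular sequence of length one, so $E=(f)$ is generated by a regular sequence. Next, since $f$ is not a unit, Krull's principal ideal theorem yields $\codim(E)=1$. Comparing the two quantities, the minimal number of generators equals the codimension, which is precisely the defining condition for $E$ to be a complete intersection.

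I expect no genuine obstacle here: the content is the elementary observation that a nonzero, non-unit principal ideal in a polynomial ring is automatically a complete intersection. Geometrically this simply reflects that the zero locus of $E$ --- the triangle formed by the three coordinate lines $H_0,H_1,H_2$ --- is a hypersurface in $\mathbb{P}^2$ cut out by the single degree-three form $f$, and every hypersurface is a complete intersection of codimension one.
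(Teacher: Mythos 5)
Your proof is correct. The paper states this lemma without any proof, treating it as immediate; your argument --- that a nonzero, non-unit principal ideal in a polynomial ring is generated by a regular sequence of length one, which equals its codimension by Krull's principal ideal theorem --- is exactly the standard justification the authors left implicit, so there is nothing to compare beyond noting that you have filled in the omitted details correctly.
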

Since the powers of a complete intersection ideal (\cite{Eis}, p.466) are arithmetically Cohen-Macaulay this implies that for all positive integers $k$ we have $E^{k}=E^{(k)}$.
\begin{proposition}
\label{triangle symbolic}
The ideal  $V^{(m)}$ is generated by monomials of the form $ x_{\sigma(0)}^{m-k} x_{\sigma(1)}^{m-k} x_{\sigma(2)}^k $ for any permutation $\sigma $ of $\{ 1,2,3 \}$ and $k \in \big\{0,1,\ldots, \lfloor \frac{m}{2} \rfloor \big\}$.
\end{proposition}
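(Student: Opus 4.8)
The plan is to characterize exactly which monomials lie in $V^{(m)}$ using the criterion from Proposition \ref{symbolic generators}, and then identify the minimal generators among them. For the ideal $V=I(2,2)$ we have $n=2$, $c=2$, so by Proposition \ref{symbolic generators} a monomial $x_0^{a_0}x_1^{a_1}x_2^{a_2}$ lies in $V^{(m)}$ if and only if every pair-sum of exponents is at least $m$, that is
\begin{equation*}
a_0+a_1\geq m,\qquad a_0+a_2\geq m,\qquad a_1+a_2\geq m.
\end{equation*}
So the whole proposition reduces to a purely combinatorial question about nonnegative integer triples satisfying these three inequalities: which of them are minimal with respect to divisibility (equivalently, componentwise order)?

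First I would show the proposed monomials actually belong to $V^{(m)}$. For a monomial of the form $x_{\sigma(0)}^{m-k}x_{\sigma(1)}^{m-k}x_{\sigma(2)}^{k}$ with $0\leq k\leq\lfloor m/2\rfloor$, by symmetry it suffices to check the three pair-sums for the representative exponent vector $(m-k,m-k,k)$: the two sums involving the small exponent $k$ give $(m-k)+k=m$, and the remaining sum gives $(m-k)+(m-k)=2m-2k\geq m$ precisely because $k\leq\lfloor m/2\rfloor\leq m/2$. Thus all three inequalities hold and the monomial lies in $V^{(m)}$.

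Next I would prove these exhaust the minimal generators. Take any exponent vector $(a_0,a_1,a_2)$ satisfying the three pair-sum inequalities; after permuting I may assume $a_0\geq a_1\geq a_2$. Minimality means no exponent can be lowered while preserving all inequalities. Because $a_0$ is the largest, decreasing it keeps the two sums it appears in above $m$ as long as they are not already tight; the only constraint that can be tight for $a_0$ is $a_0+a_2\geq m$. I would argue that for a minimal monomial each coordinate must be forced by some tight inequality, and then a short case analysis on which of the three sums are tight pins the vector down to the form $(m-k,m-k,k)$: the binding constraints $a_1+a_2=m$ and $a_0+a_2=m$ force $a_0=a_1$, while the remaining inequality $a_0+a_1\geq m$ together with minimality of $a_2$ yields $a_2=k$ and $a_0=a_1=m-k$, with the range $0\leq k\leq\lfloor m/2\rfloor$ coming from $a_0\geq a_2$ and the ordering. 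Undoing the permutation gives exactly the claimed list.

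The main obstacle I anticipate is the minimality direction: it is easy to exhibit the generators and verify containment, but showing there are \emph{no other} minimal generators requires care in arguing that every coordinate of a minimal vector is pinned by a tight pair-sum constraint, and in ruling out spurious candidates such as $(a_0,a_1,a_2)$ with all three distinct. I would handle this by showing directly that if some coordinate is not forced by a tight constraint it can be decreased, contradicting minimality, and that the tight-constraint pattern compatible with the ordering $a_0\geq a_1\geq a_2$ is essentially unique up to the parameter $k$. A minor point to state carefully is that Proposition \ref{symbolic generators} characterizes membership in $V^{(m)}$ but not minimality directly, so the passage from ``monomials in $V^{(m)}$'' to ``generators of $V^{(m)}$'' is exactly the divisibility-minimization step carried out above.
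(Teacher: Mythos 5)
Your proposal is correct, and it rests on the same foundation as the paper's proof: Proposition \ref{symbolic generators} with $c=2$, which reduces everything to the three pair-sum inequalities, together with the same verification that the monomials $x_{\sigma(0)}^{m-k}x_{\sigma(1)}^{m-k}x_{\sigma(2)}^{k}$, $0\leq k\leq\lfloor m/2\rfloor$, satisfy them. Where you diverge is the exhaustion step. The paper argues by direct divisor-exhibition: given any monomial $x^a\in V^{(m)}$ with sorted exponents, it produces a monomial from the list dividing it (in effect taking the parameter to be $\min\{a_2,\lfloor m/2\rfloor\}$), so no notion of minimality ever enters. You instead characterize the divisibility-minimal exponent vectors by a tight-constraint analysis and then invoke the standard fact that a monomial ideal is generated by its divisibility-minimal monomials. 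Your route buys slightly more information --- it shows the listed monomials are \emph{exactly} the minimal generators, with no redundancy --- at the cost of the (routine, Dickson-type) well-foundedness step and a fussier case analysis; the paper's route is shorter and more direct. Two small points in your write-up should be tightened. First, the claim that ``the only constraint that can be tight for $a_0$ is $a_0+a_2\geq m$'' is not literally true: for $(3,3,3)$ and $m=6$ all three sums are tight; what is true, and is all you need, is that under the ordering $a_0\geq a_1\geq a_2$ tightness of $a_0+a_1$ forces tightness of $a_0+a_2$, so minimality in the coordinate $a_0$ always yields $a_0+a_2=m$, and similarly minimality in $a_1$ yields $a_1+a_2=m$. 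Second, the principle ``each coordinate of a minimal vector is pinned by a tight constraint'' needs the caveat that a zero exponent cannot be decreased at all, so it is pinned vacuously; this is harmless here because your deductions only use tightness at the coordinates $a_0,a_1\geq\lceil m/2\rceil\geq 1$, and the possibly-zero coordinate $a_2$ plays no role in forcing equalities.
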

\begin{proof}
Note either directly from the definition of symbolic power or from Proposition \ref{symbolic generators} that monomials of the form $ x_{\sigma(0)}^{m-k} x_{\sigma(1)}^{m-k} x_{\sigma(2)}^k $ are in $V^{(m)}$. Next, consider any monomial $x^a=x_{\sigma(0)}^ix_{\sigma(1)}^jx_{\sigma(2)}^k \in V^{(m)}$; then $i,j,k$ satisfies condition (\ref{eq:i1ic}) with $c=2$. We are looking for a monomial of the form $ x_{\sigma(0)}^{m-k} x_{\sigma(1)}^{m-k} x_{\sigma(2)}^k $   that devides $x^a$. This is certainly the case for the monomials of the form $x_{\sigma(0)}^{i-k}x_{\sigma(1)}^{j-k}x_{\sigma(2)}^{k}$ for some $k \geq 0$. Without loss of generality we may assume that $0\leq k\leq j\leq i \leq m$. We are asking about the maximal possible value of $k$ for which the condition (\ref{eq:i1ic}) is fulfilled. We obtain
$$m \leq i-k+j-k \leq 2m-2k,$$ or equivalently $k \leq \lfloor \frac{m}{2} \rfloor$.
\end{proof}
\begin{example}
$V^{(2)}=\langle x_0^2 x_1^2, x_0^2 x_2^2, x_1^2 x_2^2, x_0 x_1 x_2 \rangle $.
\end{example}
The following proposition is a special case of Theorem A (when $c = n = 2$). The proof of Theorem A is given in section 5. This result is
also a special case of Theorem 2.18 of \cite{DeJa}.
\begin{proposition}
\label{triangleTheoremA}
For all positive integers $r$ and $m$, $V^{(m)} \subset  V^{r}$ if and only if $ 2r \leq \lceil\frac{3m}{2}\rceil$.
\end{proposition}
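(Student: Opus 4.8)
The plan is to reduce the inclusion of ideals to a finite check on monomial generators. Since $V^{(m)}$ and $V^r$ are both monomial ideals, the containment $V^{(m)}\subseteq V^r$ holds if and only if every minimal monomial generator of $V^{(m)}$ lies in $V^r$. By Proposition \ref{triangle symbolic} these generators are, up to permutation of the variables, the monomials $g_k=x_0^{m-k}x_1^{m-k}x_2^{k}$ with $0\leq k\leq\lfloor m/2\rfloor$. By Proposition \ref{prop:usual powers}, a monomial $x^b$ is a minimal generator of $V^r$ exactly when $b_0+b_1+b_2=2r$ and $b_i\leq r$ for each $i$; in particular every nonzero element of $V^r$ has degree at least $2r$, and a monomial $x^a$ belongs to $V^r$ if and only if one can choose integers $b_i$ with $0\leq b_i\leq\min(a_i,r)$ and $b_0+b_1+b_2=2r$, that is, if and only if $\min(a_0,r)+\min(a_1,r)+\min(a_2,r)\geq 2r$. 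First I would record this membership criterion, since it drives both implications.

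For the necessity of $2r\leq\lceil\frac{3m}{2}\rceil$ I would single out the generator of smallest degree. Taking $k=\lfloor m/2\rfloor$ gives $g=x_0^{\lceil m/2\rceil}x_1^{\lceil m/2\rceil}x_2^{\lfloor m/2\rfloor}$, whose degree is $2\lceil m/2\rceil+\lfloor m/2\rfloor=\lceil\frac{3m}{2}\rceil$. If $V^{(m)}\subseteq V^r$ then $g\in V^r$, and since every nonzero element of $V^r$ has degree at least $2r$ we obtain $\lceil\frac{3m}{2}\rceil\geq 2r$. For the converse I would assume $2r\leq\lceil\frac{3m}{2}\rceil$ and verify the membership criterion for each $g_k$ by a case split. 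If $m-k\geq r$, then $\min(m-k,r)=r$, so the first two terms already contribute $2r$ and $g_k\in V^r$ (concretely, $(x_0x_1)^r$ divides $g_k$). If $m-k<r$, then $k\leq m-k<r$ forces all three exponents below $r$, so the criterion reads $(m-k)+(m-k)+k=2m-k\geq 2r$; using $k\leq\lfloor m/2\rfloor$ this follows from $2m-k\geq 2m-\lfloor m/2\rfloor=\lceil\frac{3m}{2}\rceil\geq 2r$, again placing $g_k$ in $V^r$.

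The argument is elementary once the two monomial descriptions are in hand, and the conceptual heart is the observation that the minimal degree of a symbolic generator is exactly $\lceil\frac{3m}{2}\rceil$, which is precisely what makes the threshold appear. I do not expect a serious obstacle; the only points requiring care are the floor/ceiling bookkeeping and confirming that the extreme generator $g_{\lfloor m/2\rfloor}$ is the binding constraint, both of which the case split above renders transparent.
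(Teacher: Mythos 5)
Your proof is correct, but it takes a genuinely different route from the paper, which never argues this proposition directly: the paper records it as the special case $c=n=2$ of Theorem A, whose proof is deferred to Section 5 (and also points to Theorem 2.18 of \cite{DeJa}); there the general criterion $r\le\frac{(n+1)k-p}{n-c+2}$ with $m=kc-p$, $0\le p<c$, specializes to $2r\le\lceil\frac{3m}{2}\rceil$ after separating $m$ even ($p=0$) from $m$ odd ($p=1$). You instead stay entirely inside the triangle: you test the finitely many generators $g_k=x_0^{m-k}x_1^{m-k}x_2^k$ of $V^{(m)}$ from Proposition \ref{triangle symbolic} against the membership criterion $x^a\in V^r$ if and only if $\min(a_0,r)+\min(a_1,r)+\min(a_2,r)\ge 2r$, which you correctly extract from Proposition \ref{prop:usual powers}, and the two directions follow from the observation that the lowest-degree generator $g_{\lfloor m/2\rfloor}$ has degree exactly $\lceil\frac{3m}{2}\rceil$ together with a clean two-case check. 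The trade-off is this: the paper's route yields the statement for all $n$ and $c$ at once, while yours is self-contained within Section 3 and, on one point, more careful than the paper --- in the proof of Theorem A the paper asserts that, by Proposition \ref{prop:usual powers}, membership of $x^a$ in $I^r(n,c)$ follows once $a_0+\cdots+a_n\ge r(n-c+2)$, but that proposition characterizes \emph{minimal generators}, and a total-degree bound alone never forces membership (for instance $x_0^4\notin V^2$ although its degree is $4=2r$); the correct reduction is precisely your truncated-sum criterion, so your argument quietly closes a gap that a literal specialization of the paper's proof would inherit.
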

\begin{proposition}
\label{triangleContainments}
For all positive integers $m$ we have

\begin{itemize}
  \item [a)] $E^{(m)} \subset V^{(2m)}$,
  \item [b)] $E^{(m+1)} \subset V^{(2)} \cdot E^{(m)} \subset V \cdot E^{(m)}$.
\end{itemize}
\end{proposition}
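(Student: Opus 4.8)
The plan is to exploit the fact that everything on the $E$--side is principal. By Lemma \ref{E CI} the ideal $E$ is a complete intersection, so its symbolic and ordinary powers coincide; since $E=\langle x_0x_1x_2\rangle$ is principal this gives $E^{(m)}=E^m=\langle x_0^mx_1^mx_2^m\rangle$. Thus in every containment below it suffices to chase the single generator $x_0^mx_1^mx_2^m$ (respectively $x_0^{m+1}x_1^{m+1}x_2^{m+1}$) and then invoke that the target is an ideal.

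For part a) I would simply verify that the generator $x_0^mx_1^mx_2^m$ of $E^{(m)}$ lies in $V^{(2m)}$ using the exponent criterion of Proposition \ref{symbolic generators} with $c=2$. Here all three exponents equal $m$, so every pairwise sum equals $2m\geq 2m$, which is exactly condition \eqref{eq:i1ic} for the $2m$-th symbolic power. Hence $x_0^mx_1^mx_2^m\in V^{(2m)}$ and, $V^{(2m)}$ being an ideal, $E^{(m)}\subset V^{(2m)}$.

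For part b) the second inclusion $V^{(2)}\cdot E^{(m)}\subset V\cdot E^{(m)}$ is immediate from the monotonicity $V^{(2)}\subseteq V^{(1)}=V$ of symbolic powers recalled in the Preliminaries, since multiplying a containment of ideals by $E^{(m)}$ preserves it. For the first inclusion the key observation is that $x_0x_1x_2\in V^{(2)}$: its exponent vector $(1,1,1)$ satisfies every pairwise sum $1+1=2\geq 2$, so Proposition \ref{symbolic generators} (with $c=2$ and $m=2$) places it in $V^{(2)}$, as is also visible in the explicit list of generators of $V^{(2)}$ computed above. Factoring $x_0^{m+1}x_1^{m+1}x_2^{m+1}=(x_0x_1x_2)\cdot(x_0^mx_1^mx_2^m)$ then exhibits the generator of $E^{(m+1)}$ as a product of an element of $V^{(2)}$ with the generator of $E^{(m)}$, whence $E^{(m+1)}\subset V^{(2)}\cdot E^{(m)}$.

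There is no serious obstacle here: once $E$ is recognised as principal, each containment reduces to a one-line exponent check via Proposition \ref{symbolic generators}. The only point requiring a moment's care is to remember that it suffices to test the unique monomial generator of each symbolic power of $E$, rather than an arbitrary element, which is justified precisely because $E^{(m)}$ is principal.
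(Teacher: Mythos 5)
Your proof is correct and follows essentially the same route as the paper: both arguments reduce everything to the single generator $x_0^mx_1^mx_2^m$ via Lemma \ref{E CI}, both rest on the observation that $x_0x_1x_2\in V^{(2)}$ (equivalently $(x_0x_1x_2)^m\in V^{(2m)}$), and both obtain part b) by multiplying this containment through by powers of $E$. The only cosmetic difference is that you verify membership in $V^{(2m)}$ directly with the exponent criterion of Proposition \ref{symbolic generators}, whereas the paper cites the explicit generator list of Proposition \ref{triangle symbolic}; the two checks are interchangeable.
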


\begin{proof}
a) From Proposition \ref{triangle symbolic} we get $(x_0 x_1 x_2)^{m}  \in V^{(2m)}$, but $E^{(m)}= \langle(x_0 x_1 x_2)^{m}\rangle $.\\
b) We have $ V^{(2)}=\langle x_0^2 x_1^2, x_0^2 x_2^2, x_1^2 x_2^2, x_0 x_1 x_2 \rangle$ and $ E=\langle   x_0 x_1 x_2 \rangle $. It
is easy to see that then $E \subset V^{(2)}$. That implies $E^{m+1} \subset V^{(2)} \cdot E^{m}$, which by Lemma \ref{E CI} is equivalent to
$E^{(m+1)} \subset V^{(2)} \cdot E^{(m)}$. The inclusion  $V^{(2)} \cdot E^{(m)} \subset V \cdot E^{(m)}$ is obvious.
\end{proof}
From Propositions \ref{triangle symbolic}, \ref{triangleTheoremA} and \ref{triangleContainments} we obtain another  interesting observations about the
containment relations. We visualize them in the following diagram. The direction of arrows symbolizes
inclusion between the ideals.

\begin{figure}
\qquad\qquad\qquad\qquad\qquad
\includegraphics[width=200px]{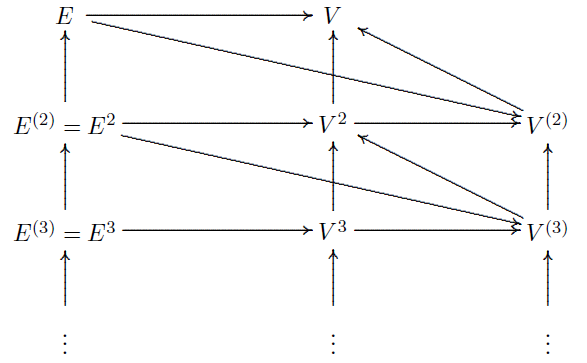}
\end{figure}

\begin{proposition}
\label{power}
For all positive integers $m$ it holds $V^{(2m)}=(V^{(2)})^m$ and $V^{(2m+1)}=V^{(2m)}V$.
\end{proposition}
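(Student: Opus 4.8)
The plan is to prove both identities by showing mutual containment, using the explicit monomial description of $V^{(m)}$ from Proposition \ref{triangle symbolic} together with Proposition \ref{symbolic generators} (with $c=2$) as the membership criterion. For any monomial $x^a = x_0^{a_0}x_1^{a_1}x_2^{a_2}$, membership in $V^{(m)}$ is equivalent to the three pairwise-sum conditions $a_0+a_1\geq m$, $a_0+a_2\geq m$, $a_1+a_2\geq m$. This reduces everything to elementary inequalities on exponent vectors.

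First I would handle $V^{(2m)}=(V^{(2)})^m$. The inclusion $(V^{(2)})^m\subseteq V^{(2m)}$ is immediate from the general containment \eqref{symb.incl}, namely $(I^{(a)})^b\subseteq I^{(ab)}$ with $a=2$, $b=m$. For the reverse inclusion $V^{(2m)}\subseteq (V^{(2)})^m$, I would take an arbitrary generator of $V^{(2m)}$; by Proposition \ref{triangle symbolic} it has the form $x_{\sigma(0)}^{2m-k}x_{\sigma(1)}^{2m-k}x_{\sigma(2)}^{k}$ for some $0\leq k\leq m$, and I would exhibit it as a product of $m$ generators of $V^{(2)}$. The natural decomposition is to use $k$ copies of the generator $x_{\sigma(0)}x_{\sigma(1)}x_{\sigma(2)}$ and $m-k$ copies of $x_{\sigma(0)}^2x_{\sigma(1)}^2$; multiplying gives exponents $(2(m-k)+k,\,2(m-k)+k,\,k)=(2m-k,2m-k,k)$, exactly as required. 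This is a clean bookkeeping check on the three exponents.

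For $V^{(2m+1)}=V^{(2m)}V$, the inclusion $V^{(2m)}V\subseteq V^{(2m+1)}$ follows because $V=V^{(1)}$ and the product $V^{(2m)}V^{(1)}\subseteq V^{(2m+1)}$ is an instance of the general fact $I^{(a)}I^{(b)}\subseteq I^{(a+b)}$, which is immediate from the pairwise-sum criterion (adding the inequalities for the two factors). For the reverse inclusion I would again take a generator $x_{\sigma(0)}^{2m+1-k}x_{\sigma(1)}^{2m+1-k}x_{\sigma(2)}^{k}$ of $V^{(2m+1)}$, where now $0\leq k\leq \lfloor(2m+1)/2\rfloor = m$, and factor out one generator of $V$: dividing by $x_{\sigma(0)}x_{\sigma(1)}x_{\sigma(2)}$ leaves exponents $(2m-k,2m-k,k-1)$ when $k\geq 1$, or dividing by $x_{\sigma(0)}x_{\sigma(1)}$ leaves $(2m-k,2m-k,k)$ when $k=0$; in either case I must verify the remaining monomial lies in $V^{(2m)}$ via the three pairwise sums.

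The main obstacle, and the point needing the most care, is checking that after dividing out a single generator of $V$ the residual monomial still satisfies all three defining inequalities of $V^{(2m)}$ — in particular the delicate cases where one exponent is small (the boundary cases $k=0$ and $k=m$). For instance, when $k=m$ the generator is $x_{\sigma(0)}^{m+1}x_{\sigma(1)}^{m+1}x_{\sigma(2)}^{m}$, and one must confirm the residual $(m,m,m)$ or $(m+1,m+1,m-1)$ indeed meets the $V^{(2m)}$ thresholds; these small-exponent endpoints are exactly where an ill-chosen generator of $V$ would fail, so the argument must select the divisor adaptively (dividing out the variable with the largest exponent). I expect the rest to be routine verification of linear inequalities.
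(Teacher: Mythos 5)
Your proof of the first identity $V^{(2m)}=(V^{(2)})^m$ is correct and coincides with the paper's: the containment $(V^{(2)})^m\subseteq V^{(2m)}$ comes from (\ref{symb.incl}), and the reverse comes from the factorization $x_{\sigma(0)}^{2m-k}x_{\sigma(1)}^{2m-k}x_{\sigma(2)}^{k}=\bigl(x_{\sigma(0)}^2x_{\sigma(1)}^2\bigr)^{m-k}\bigl(x_{\sigma(0)}x_{\sigma(1)}x_{\sigma(2)}\bigr)^{k}$. The easy direction of the second identity, $V^{(2m)}V\subseteq V^{(2m+1)}$, obtained by adding the pairwise-sum inequalities of the two factors, is also fine.

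The problem is your concrete recipe for the hard direction $V^{(2m+1)}\subseteq V^{(2m)}V$. For $k\geq 1$ you divide the generator $x_{\sigma(0)}^{2m+1-k}x_{\sigma(1)}^{2m+1-k}x_{\sigma(2)}^{k}$ by $x_{\sigma(0)}x_{\sigma(1)}x_{\sigma(2)}$, leaving the residual exponent vector $(2m-k,2m-k,k-1)$. This residual is \emph{never} in $V^{(2m)}$: the pairwise sum $(2m-k)+(k-1)=2m-1<2m$ violates the criterion of Proposition \ref{symbolic generators} for every $k\geq 1$, not merely at the boundary cases you flag. (Your sample residuals at $k=m$ are also off: dividing $(m+1,m+1,m)$ by $(1,1,1)$ gives $(m,m,m-1)$, which fails, and $(m+1,m+1,m-1)$ is not obtainable by dividing out any element of $V$.) The fix is the choice you make only in the case $k=0$: for \emph{all} $k$, divide by $x_{\sigma(0)}x_{\sigma(1)}$, the product of the two variables carrying the largest exponents. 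The residual is then $(2m-k,2m-k,k)$, whose pairwise sums are $4m-2k\geq 2m$ (as $k\leq m$), $2m$, and $2m$, so it lies in $V^{(2m)}$ uniformly in $0\leq k\leq m$, with no case distinction needed. You do gesture at an adaptive choice of divisor at the end, but the divisor you actually specify for $k\geq 1$ is precisely the one that fails, so as written the key step of your argument breaks down. For comparison, the paper proves this direction by the equivalent multiplicative computation: each generator $x_{\sigma(0)}^{2m+1-k}x_{\sigma(1)}^{2m+1-k}x_{\sigma(2)}^{k}$ of $V^{(2m+1)}$ is exhibited as the product of the generator $x_{\sigma(0)}^{2m-k}x_{\sigma(1)}^{2m-k}x_{\sigma(2)}^{k}$ of $V^{(2m)}$ with the generator $x_{\sigma(0)}x_{\sigma(1)}$ of $V$.
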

\begin{proof}
By (\ref{symb.incl}) we have $\big( V^{(2)} \big) ^{m} \subseteq V^{(2m)}$.

On the other hand by Proposition \ref{triangle symbolic} any generator of $V^{(2m)}$ has the form $x_{\sigma(0)}^{2m-j} x_{\sigma(1)}^{2m-j} x_{\sigma(2)}^j$
for some $j\in\{0,1,\ldots,m\}$. Then $$x_{\sigma(0)}^{2m-j} x_{\sigma(1)}^{2m-j} x_{\sigma(2)}^j = (x_{\sigma(0)}^2x_{\sigma(1)}^2)^{m-j}(x_{\sigma(0)} x_{\sigma(1)}x_{\sigma(2)})^j \in (V^{(2)})^m.$$ This ends the first part of the proof of the Proposition. In the second part, using Proposition \ref{triangle symbolic} again, we obtain
\begin{eqnarray*}
V^{(2m)} V&=& V^{(2m)} \cdot \langle x_0
x_1,x_0x_2,x_1x_2\rangle =\\&=& \langle x_0^{2m+1} x_1^{2m+1},x_0^{2m+1} x_2^{2m+1},\ldots,x_1^{m+1} x_2^{m+1}x_0^{m}\rangle = V^{(2m+1)}.
\end{eqnarray*}
\end{proof}

\noindent  Proposition \ref{power} is a special case of Theorem 3.6 of \cite{DeJa}.

\noindent We conclude this section with the following corollary from Proposition \ref{triangle symbolic}
\begin{corollary}
There is the following relation $$V^{(2)}=E+ V^2.$$
\end{corollary}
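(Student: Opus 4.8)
The plan is to prove the equality $V^{(2)} = E + V^2$ by a direct comparison of generating sets, using the explicit monomial descriptions already established. Since all three ideals $V^{(2)}$, $E$, and $V^2$ are monomial ideals, it suffices to show that a monomial lies in the left-hand side if and only if it lies in the right-hand side; and for a sum of monomial ideals $E + V^2$, membership of a monomial reduces to divisibility by some generator of $E$ or of $V^2$.

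First I would write down the two sides concretely. From the Example we already have $V^{(2)} = \langle x_0^2 x_1^2,\, x_0^2 x_2^2,\, x_1^2 x_2^2,\, x_0 x_1 x_2 \rangle$, and $E = \langle x_0 x_1 x_2 \rangle$. For $V^2$, applying Lemma \ref{lem:inc_gen} with $n=2$, $c=2$ gives that $V = \langle x_0 x_1,\, x_0 x_2,\, x_1 x_2\rangle$, so $V^2$ is generated by the pairwise products of these three generators, namely $x_0^2 x_1^2$, $x_0^2 x_2^2$, $x_1^2 x_2^2$, $x_0^2 x_1 x_2$, $x_0 x_1^2 x_2$, and $x_0 x_1 x_2^2$.

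Next I would verify the two inclusions. For $E + V^2 \subseteq V^{(2)}$: the generator of $E$ is already a generator of $V^{(2)}$, and each of the six generators of $V^2$ is divisible by one of $x_0^2 x_1^2$, $x_0^2 x_2^2$, $x_1^2 x_2^2$, $x_0 x_1 x_2$ (for instance $x_0^2 x_1 x_2$ is divisible by $x_0 x_1 x_2$, and $x_0^2 x_1^2$ is itself a generator of $V^{(2)}$), so all lie in $V^{(2)}$. For the reverse inclusion $V^{(2)} \subseteq E + V^2$, I would check that each of the four generators of $V^{(2)}$ lies in $E + V^2$: the three squared generators $x_0^2 x_1^2$, $x_0^2 x_2^2$, $x_1^2 x_2^2$ are themselves generators of $V^2$, and $x_0 x_1 x_2$ is the generator of $E$. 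This gives equality of the monomial ideals.

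I do not expect any serious obstacle here, since everything is explicit and the ideals live in three variables with a bounded, finite set of generators. The only point requiring a little care is making sure the generating set of $V^2$ is computed correctly — in particular remembering the ``mixed'' generators such as $x_0^2 x_1 x_2$ that arise from products of distinct generators of $V$ — and confirming that the single extra generator $x_0 x_1 x_2$ of $V^{(2)}$, which is \emph{not} in $V^2$ (it has degree $3$ whereas every generator of $V^2$ has degree $4$), is supplied precisely by the summand $E$. This degree bookkeeping is exactly what forces the $E$ term to be necessary, and it is the conceptual content of the corollary.
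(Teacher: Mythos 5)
Your proposal is correct and follows essentially the same route as the paper: both proofs write out the explicit monomial generators of $E$, $V^2$, and $V^{(2)}$ (the latter via Proposition \ref{triangle symbolic}) and observe that the ideal $E+V^2$ simplifies to $\langle x_0^2 x_1^2,\, x_0^2 x_2^2,\, x_1^2 x_2^2,\, x_0 x_1 x_2\rangle = V^{(2)}$, the mixed degree-four generators of $V^2$ being absorbed by $x_0x_1x_2$. Your version merely organizes the same computation as two explicit inclusions rather than a single chain of equalities.
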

\begin{proof}
It is a simple observation. Directly from the definition of $V^2$ and from Proposition \ref{triangle symbolic} we get
\begin{eqnarray*}
E+ V^2 & =&\langle x_0^2 x_1^2,
x_0^2 x_2^2, x_1^2 x_2^2, x_0^2 x_1 x_2, x_0 x_1^2 x_2, x_0 x_1 x_2^2, x_0 x_1 x_2 \rangle =\\ &=&\langle x_0^2 x_1^2, x_0^2 x_2^2, x_1^2 x_2^2,
x_0 x_1 x_2 \rangle = V^{(2)}.
\end{eqnarray*}
\end{proof}
\section{Tetrahedron}
\label{tetra}

As a natural generalization of previous results we consider the tetrahedron in $\mathbb{P}^3$.
We write  $$P_0=[1:0:0:0],P_1=[0:1:0:0],P_2=[0:0:1:0],P_3=[0:0:0:1]$$ for the vertices of the tetrahedron.

As before, we denote by $V=I(3,3)=\langle x_0 x_1,x_0 x_2,x_1 x_2,x_2 x_3,x_1 x_3,x_0 x_3\rangle $ the ideal of vertices, $E=I(3,2)=\langle
x_0x_1 x_2,x_1 x_2 x_3,x_0 x_2 x_3,x_0x_1 x_3\rangle $ the ideal of edges and by $F=I(3,1)=\langle x_0 x_1 x_2 x_3 \rangle  $ the ideal of faces.

Once again we take a closer look at the symbolic and ordinary powers of these ideals. We obtain following results.

\begin{lemma}
The ideal $F$ is a complete intersection ideal and this implies the equality $F^{(k)}=F^k$ for all positive integers $k$.
\end{lemma}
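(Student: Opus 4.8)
The plan is to mirror exactly the argument already used for the ideal $E$ in Lemma \ref{E CI}, since $F$ plays in the tetrahedron the same role $E$ played in the triangle. First I would observe that $F=I(3,1)$ is a \emph{principal} ideal: by Lemma \ref{lem:inc_gen} applied with $n=3$ and $c=1$, it is generated by the single squarefree monomial $x_0x_1x_2x_3$. A principal ideal generated by one nonzero, nonunit element is automatically a complete intersection, because that single generator forms a regular sequence of length one, and this matches the height of $F$, which is $1$ (the element $x_0x_1x_2x_3$ is a nonzerodivisor in the domain $S(3)$). This settles the first assertion.

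For the second assertion I would invoke, exactly as in the remark following Lemma \ref{E CI}, the fact from (\cite{Eis}, p.466) that all powers of a complete intersection ideal are arithmetically Cohen--Macaulay. In particular $S(3)/F^k$ is Cohen--Macaulay, hence unmixed, for every positive integer $k$. Consequently $F^k$ has no embedded associated primes, so it coincides with the intersection of its primary components taken over the minimal primes of $F$; but by definition that intersection is precisely $F^{(k)}$. This gives $F^{(k)}=F^k$ for all $k\geq 1$.

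There is essentially no obstacle here: the statement is the tetrahedron analogue of Lemma \ref{E CI}, and the only genuine content is recognizing via Lemma \ref{lem:inc_gen} that $F$ is principal. The single point worth stating with care is the passage from ``powers are Cohen--Macaulay'' to ``symbolic equals ordinary'': it rests on the observation that for a radical ideal the symbolic power differs from the ordinary power only by removal of embedded primary components, and for a complete intersection no such embedded components occur.
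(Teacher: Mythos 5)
Your proposal is correct and takes essentially the same approach the paper intends: the paper states this lemma without proof, relying---exactly as you do---on the observation that $F=\langle x_0x_1x_2x_3\rangle$ is principal of height one (hence a complete intersection) together with the fact from (\cite{Eis}, p.~466) that powers of complete intersection ideals are arithmetically Cohen--Macaulay, so that $F^k$ has no embedded components and therefore equals $F^{(k)}$. Your write-up merely makes explicit the details (principal $\Rightarrow$ complete intersection, Cohen--Macaulay $\Rightarrow$ unmixed $\Rightarrow$ symbolic equals ordinary for a radical ideal) that the paper leaves implicit, mirroring its treatment of $E$ in Lemma \ref{E CI}.
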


\begin{proposition}
\label{tetrahedron symbolic}
For any permutation $\sigma$ of $\{ 0,1,2,3 \}$ the generators of
\begin{itemize}
  \item [1)] $E^{(m)}$ are of the form $ x_{\sigma(0)}^{m-j} x_{\sigma(1)}^{m-j} x_{\sigma(2)}^{m-j} x_{\sigma(3)}^j $ for $j \in
      \big\{0,1,\ldots,\lfloor\frac{m}{2}\rfloor \big\}$,
  \item [2)] $V^{(m)}$ are of the form $ x_{\sigma(0)}^{m-i-j} x_{\sigma(1)}^{m-i-j} x_{\sigma(2)}^i x_{\sigma(3)}^j $ for $ 0 \leq 2i+j \leq  m $ and $ 0 \leq i+2j \leq  m $.
\end{itemize}
\end{proposition}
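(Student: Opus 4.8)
The plan is to derive both parts from the combinatorial description of symbolic powers in Proposition~\ref{symbolic generators}, specialised to $n=3$: for $E=I(3,2)$ the relevant value is $c=2$, while for $V=I(3,3)$ it is $c=3$. In each case I would argue in two steps. First I verify that every monomial of the advertised shape actually lies in the symbolic power, by reading its exponents off the multiset and checking inequality~\eqref{eq:i1ic} for each $c$-tuple. For $E^{(m)}$ the exponent multiset is $\{m-j,m-j,m-j,j\}$, whose pair-sums are $2(m-j)$ (which is $\geq m$ exactly when $j\leq m/2$) and $(m-j)+j=m$, so membership holds precisely for $j\leq\lfloor m/2\rfloor$. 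For $V^{(m)}$ the multiset is $\{m-i-j,m-i-j,i,j\}$, and its four triple-sums are $m$, $2m-2i-j$ and $2m-i-2j$; these are all $\geq m$ exactly under the stated constraints $2i+j\leq m$ and $i+2j\leq m$.

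The second, substantive step is the converse: every monomial $x^a$ in the symbolic power must be divisible by one of the listed monomials, so that together they form a generating set. After permuting the variables I may assume $a_0\geq a_1\geq a_2\geq a_3$, so that among all the $c$-tuple conditions the binding one comes from the smallest exponents. For $E^{(m)}$ this binding condition is $a_2+a_3\geq m$; combined with $a_2\geq a_3$ it forces $a_2\geq\lceil m/2\rceil$. Setting $j=\max(0,m-a_2)$ then yields $0\leq j\leq\lfloor m/2\rfloor$, $j\leq a_3$, and $m-j\leq a_2\leq a_1\leq a_0$, so the generator $x_0^{m-j}x_1^{m-j}x_2^{m-j}x_3^{j}$ divides $x^a$. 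This settles part~1).

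For part~2) the binding condition is $a_1+a_2+a_3\geq m$, and I must produce a pair $(i,j)$ in the ``generator region'' $\{2i+j\leq m,\ i+2j\leq m,\ i,j\geq0\}$ for which the generator $x_0^{m-i-j}x_1^{m-i-j}x_2^{i}x_3^{j}$ divides $x^a$; divisibility demands $m-i-j\leq a_1$, $i\leq a_2$ and $j\leq a_3$. The key device is to fix the total $i+j=t:=\max(0,m-a_1)$. Imposing $i+j=t$ converts the two generator constraints into the single requirement $i,j\leq m-t$, while the first divisibility condition becomes $m-i-j=m-t\leq a_1$. In the main case $t=m-a_1$ one has $m-t=a_1\geq a_2\geq a_3$, so the caps $i\leq a_2$, $j\leq a_3$ are the effective ones, and a splitting $i+j=t$ respecting them exists precisely because $a_2+a_3\geq m-a_1=t$, which is the binding symbolic condition; any such $(i,j)$ gives a generator dividing $x^a$. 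The degenerate case $t=0$, i.e. $a_1\geq m$, is immediate since then $x_0^m x_1^m$ already divides $x^a$.

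I expect the main obstacle to be exactly this balancing act in part~2): one cannot simply take $i=a_2$, $j=a_3$, since that generally violates $2i+j\leq m$ or $i+2j\leq m$, so the exponents must be capped. The content of the argument is that capping the total at $t=\max(0,m-a_1)$ simultaneously keeps $(i,j)$ inside the generator region and, thanks to $a_1+a_2+a_3\geq m$, keeps it large enough for the generator to divide $x^a$. Everything else reduces to the routine exponent bookkeeping already illustrated in the proof of Proposition~\ref{triangle symbolic}.
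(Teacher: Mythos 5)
Your proof is correct and follows essentially the same route as the paper: both directions rest on the characterization of symbolic powers in Proposition~\ref{symbolic generators}, with membership checked by listing the pair/triple sums and generation proved by sorting the exponents and exhibiting a dividing monomial of the prescribed shape. Your explicit construction via $t=\max(0,m-a_1)$ and the splitting $i+j=t$ merely fills in the step the paper dismisses with ``it is easy to see,'' so it is a welcome elaboration rather than a different argument.
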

\begin{proof}
Proof of the case 1) is similar to the proof of Theorem \ref{triangle symbolic} with one additional variable.

 Let focus on the proof of case 2). It is clear that monomials of the given form are in the ideal. To show they generate, consider any
 $x_{\sigma(0)}^{a} x_{\sigma(1)}^{b} x_{\sigma(2)}^{c} x_{\sigma(3)}^{d} $  in the ideal. Without loss of generality we may assume
 $a\leq b \leq c \leq d$ and hence $a+b+c\geq m$. It is easy to see this monomial is divisible by one of the form
 $x_{\sigma(0)}^{i} x_{\sigma(1)}^{j} x_{\sigma(2)}^{k} x_{\sigma(3)}^{l} $ with $i \leq j \leq k =l= m-i -j$. Thus we have
 $i\leq j$ and $i + 2j \leq m$. Thus monomials of this form generate, but these are the same monomilas as given in the part (2) of the statement of Proposition \ref{tetrahedron symbolic}.
 \end{proof}

\begin{proposition}
For all positive integers $m$ it holds that $E^{(2m)}=(E^{(2)})^m$ and $E^{(2m+1)}=E^{(2m)}E$.
\end{proposition}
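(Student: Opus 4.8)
The goal is to prove that for all positive integers $m$ we have $E^{(2m)}=(E^{(2)})^m$ and $E^{(2m+1)}=E^{(2m)}E$, where $E = I(3,2)$ is the ideal of edges of the tetrahedron in $\mathbb{P}^3$. This is the exact analogue of Proposition \ref{power} for the triangle case, and I would follow that model closely, substituting the symbolic‐power description supplied by part (1) of Proposition \ref{tetrahedron symbolic}.

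\medskip

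\noindent\textbf{Plan.}
First I would dispose of the easy containment. By the general inclusion \eqref{symb.incl} we have $\big(E^{(2)}\big)^m \subseteq E^{(2m)}$, so only the reverse inclusion needs work. For the reverse, I would take an arbitrary minimal generator of $E^{(2m)}$. By Proposition \ref{tetrahedron symbolic}(1), after relabeling via a permutation $\sigma$ it has the shape $x_{\sigma(0)}^{2m-j}x_{\sigma(1)}^{2m-j}x_{\sigma(2)}^{2m-j}x_{\sigma(3)}^{j}$ with $j\in\{0,1,\ldots,m\}$. The key algebraic identity, mirroring the triangle proof, is to factor this as
\[
x_{\sigma(0)}^{2m-j}x_{\sigma(1)}^{2m-j}x_{\sigma(2)}^{2m-j}x_{\sigma(3)}^{j}
=\big(x_{\sigma(0)}^{2}x_{\sigma(1)}^{2}x_{\sigma(2)}^{2}\big)^{m-j}\big(x_{\sigma(0)}x_{\sigma(1)}x_{\sigma(2)}x_{\sigma(3)}\big)^{j}.
\]
Here $x_{\sigma(0)}^{2}x_{\sigma(1)}^{2}x_{\sigma(2)}^{2}$ is a generator of $E^{(2)}$ (it is the $j=0$ case), and $x_{\sigma(0)}x_{\sigma(1)}x_{\sigma(2)}x_{\sigma(3)}$ is the common multiple lying in $E^{(2)}$ since each pair of exponents sums to at least $2$; explicitly it belongs to $E^{(2)}$ by Proposition \ref{symbolic generators}. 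Thus the product lies in $\big(E^{(2)}\big)^m$, which gives $E^{(2m)}\subseteq\big(E^{(2)}\big)^m$ and hence equality.

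\medskip

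\noindent\textbf{Second identity.}
For $E^{(2m+1)}=E^{(2m)}E$, I would again argue by double containment at the level of monomial generators. The inclusion $E^{(2m)}E\subseteq E^{(2m+1)}$ follows because any product of a generator of $E^{(2m)}$ and the generator $x_0x_1x_2x_3$ (or any of the four generators of $E$) meets the vanishing conditions \eqref{eq:i1ic} to order $2m+1$ on every codimension‐$2$ face, by Proposition \ref{symbolic generators}. For the reverse inclusion I would take a minimal generator of $E^{(2m+1)}$, which by Proposition \ref{tetrahedron symbolic}(1) has the form $x_{\sigma(0)}^{2m+1-j}x_{\sigma(1)}^{2m+1-j}x_{\sigma(2)}^{2m+1-j}x_{\sigma(3)}^{j}$ with $j\in\{0,\ldots,m\}$, and peel off one factor of the edge generator $x_{\sigma(0)}x_{\sigma(1)}x_{\sigma(2)}$ (the generator of $E$ supported on the first three indices), leaving $x_{\sigma(0)}^{2m-j}x_{\sigma(1)}^{2m-j}x_{\sigma(2)}^{2m-j}x_{\sigma(3)}^{j}$, which is a generator of $E^{(2m)}$ by part (1) again. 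Alternatively, as in Proposition \ref{power}, one can simply compute $E^{(2m)}E$ by listing generators and recognizing the result as the generating set of $E^{(2m+1)}$.

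\medskip

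\noindent\textbf{Main obstacle.}
The routine calculations are genuinely routine; the only point requiring care is the bookkeeping of the permutation index $\sigma$ and the exponent ranges, making sure that the factorization respects the constraint $0\le j\le m$ so that both factors are legitimate generators in the claimed ranges. In particular, for the second identity one must check that peeling off an edge generator leaves the exponent $j$ within the admissible range $\{0,\ldots,m\}$ for $E^{(2m)}$ — this holds precisely because $2m+1$ is odd, forcing $j\le m$, so no degenerate case arises. I expect this indexing verification, rather than any conceptual difficulty, to be the one place where slips are easy; everything else transfers verbatim from the triangle proof with the single extra coordinate $x_{\sigma(3)}$.
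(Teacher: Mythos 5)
Your proposal is correct and takes essentially the same approach as the paper, whose own proof simply states that it ``repeats the reasoning in the proof of Proposition \ref{power}'' --- exactly the adaptation you carried out: the inclusion \eqref{symb.incl} in one direction, the factorization of the generators supplied by Proposition \ref{tetrahedron symbolic}(1) in the other, and peeling off a generator of $E$ (or, equivalently, multiplying out $E^{(2m)}E$) for the odd case. One cosmetic slip: $x_0x_1x_2x_3$ is the generator of $F=I(3,1)$, not of $E$, whose generators are the four degree-three monomials $x_ix_jx_k$ --- but since your argument actually uses those four generators, nothing is affected.
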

\begin{proof}
This repeats the reasoning in the proof of Proposition \ref{power} and is left to a motivated reader.
\end{proof}
Immediately from Proposition \ref{tetrahedron symbolic}, Theorem A and Theorem B (we refer to section 5 for the proofs of A and B) we can derive many more containment relations between the ideals $V,E,F$ and their ordinary and symbolic powers. The following diagram presents them when $n = 4$. The direction of arrows symbolizes inclusion between the ideals.

\begin{figure}
\qquad
\includegraphics[width=330px]{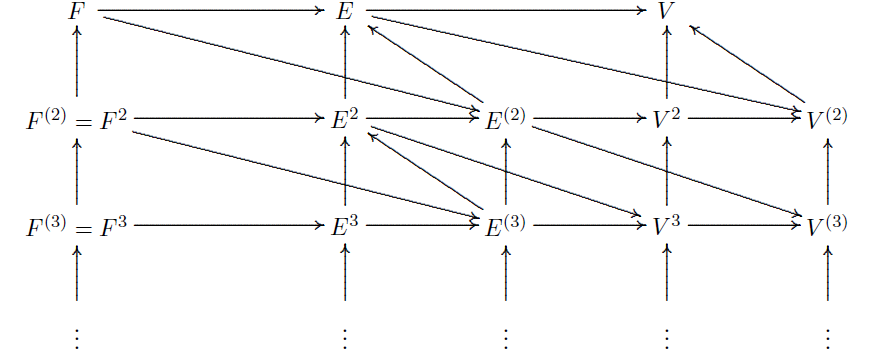}
\end{figure}

As an example we prove here the inclusion $E^{(3)}\subset E^2 $.
\begin{example}
Proposition \ref{tetrahedron symbolic} shows that generators of $E^{(3)}$ are in two forms
$$ x_{\sigma(0)}^3 x_{\sigma(1)}^3 x_{\sigma(2)}^3 = e \cdot x_{\sigma(0)} x_{\sigma(1)} x_{\sigma(2)},$$
and
$$ x_{\sigma(0)}^2 x_{\sigma(1)}^2 x_{\sigma(2)}^2 x_{\sigma(3)} = e \cdot x_{\sigma(3)},$$ where $e= x_{\sigma(0)}^2 x_{\sigma(1)}^2 x_{\sigma(2)}^2 $.
The generators of $E^2$ give us that $e \in E^2$ which finishes the proof.
\end{example}
We finish this section with the following corollary from Proposition \ref{tetrahedron symbolic}.
\begin{corollary}
$E^{(2)} = F+E^2$.
\end{corollary}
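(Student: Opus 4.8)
The plan is to mimic the direct computation used for the analogous triangle identity $V^{(2)}=E+V^2$, reducing everything to comparing explicit monomial generating sets. Since both sides are monomial ideals, it suffices to produce the same (reduced) list of monomial generators for each.

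First I would read off the generators of $E^{(2)}$ from Proposition \ref{tetrahedron symbolic}(1) with $m=2$: here $j$ ranges over $\{0,1\}$, so the generators are the four ``squared triples'' $x_i^2x_j^2x_k^2$ (one for each choice of three distinct indices among $0,1,2,3$, coming from $j=0$) together with the single monomial $x_0x_1x_2x_3$ (coming from $j=1$). Next I would compute $E^2$ by multiplying the four generators $g_0=x_1x_2x_3,\; g_1=x_0x_2x_3,\; g_2=x_0x_1x_3,\; g_3=x_0x_1x_2$ of $E$ in pairs. The diagonal products $g_i^2$ recover exactly the four squared triples, while each off-diagonal product $g_ig_j$ with $i\neq j$ is a degree-$6$ monomial in which the two indices common to $g_i$ and $g_j$ appear squared and the two remaining indices appear to the first power; for instance $g_0g_1=x_0x_1x_2^2x_3^2$.

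The key observation, and the only real content of the proof, is that every off-diagonal monomial is divisible by $x_0x_1x_2x_3$ and hence lies in $F$: for example $x_0x_1x_2^2x_3^2=(x_0x_1x_2x_3)\cdot x_2x_3$. Therefore, once $F$ is adjoined, all off-diagonal generators of $E^2$ become redundant, and the generating set of $F+E^2$ collapses to the four squared triples $x_i^2x_j^2x_k^2$ together with $x_0x_1x_2x_3$. This is precisely the generating set of $E^{(2)}$ obtained in the first step, so $F+E^2=E^{(2)}$.

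I expect no genuine obstacle here: the argument is a finite monomial bookkeeping exercise, entirely parallel to the corollary $V^{(2)}=E+V^2$ in the triangle case. If one prefers a structural phrasing over the explicit list, the inclusion $F+E^2\subseteq E^{(2)}$ follows because $E^2\subseteq E^{(2)}$ by \eqref{symb.incl} and $x_0x_1x_2x_3\in E^{(2)}$ by Proposition \ref{tetrahedron symbolic}, while the reverse inclusion amounts exactly to the remark that each squared triple equals some $g_i^2\in E^2$ and that $x_0x_1x_2x_3$ generates $F$. The substantive point in either formulation is the same: the degree-$4$ generator of $F$ absorbs precisely the mixed cross-terms that $E^2$ produces but that $E^{(2)}$ does not list among its generators.
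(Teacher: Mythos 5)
Your proof is correct and follows essentially the same route as the paper: the paper presents this identity as a direct corollary of Proposition \ref{tetrahedron symbolic} (leaving the computation implicit, in analogy with the triangle identity $V^{(2)}=E+V^2$), and your explicit comparison of generating sets --- the four squared triples plus $x_0x_1x_2x_3$ on both sides, with the mixed products $g_ig_j$ absorbed by $F$ --- is exactly that computation spelled out.
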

\begin{remark}
This generalizes easily to $I^{(2)}(n,2)=I(n,1)+I^2(n,2)$ in the $\mathbb{P}^n$ case.
\end{remark}

\section{General case}
\label{general}

We begin by proving Theorem B. 
\proofof{Theorem B}
Let $x^a=x_0^{a_0}\cdot\ldots\cdot x_n^{a_n} \in I^{(m)}(n,c)$. Fix $j_1,\ldots, j_d$ pairwise distinct in the set $\{0,\ldots,n\}$. By Proposition \ref{symbolic generators} we have
\begin{equation}
\label{trick}
\text{d}\left\{\begin{array}{llllllllcl}
a_{j_1}\;+&&\ldots &  +\; a_{j_c}&& &&&\geq& m\\
&a_{j_2}\;+&&  \ldots  &+\;a_{j_{c+1}}&& &&\geq& m\\

 &&&&\vdots&&&& \\
 &&a_{j_{1+d-c}}\;+&&  \ldots  &&+\;a_{j_{d}} &&\geq& m \\
 a_{j_1}\;&&& +\;a_{j_{2+d-c}} \;+ &&\ldots  &+\;a_{j_{d}}&&\geq& m \\
 &&&&\vdots&&&& \\
a_{j_1}\;+&\; \ldots&+\;a_{j_{c-1}}\;&  && &+\;a_{j_d}&&\geq& m
\end{array} \right.
\end{equation}

Taking into account that every element $a_{j_{i}}$ appears in $c$ inequalities and taking the sum of all of them we get $$ c(a_{j_1} + \ldots +
a_{j_d} )\geq dm.$$
This means that $a_{j_1} + \ldots + a_{j_d} \geq \frac{md}{c}$. Since this holds for arbitrary choice of $j_1,\ldots,j_d$, Proposition \ref{symbolic generators} implies $x^a \in I^{(\lceil \frac{d}{c} m\rceil)}(n,d)$. Since $s \leq \lceil \frac{d}{c} m\rceil$ by assumption, we have $ I^{(\lceil \frac{d}{c} m\rceil)} (n,d) \subset I^{(s)}(n,d)$, so that $x^a \in I^{(s)}(n,d)$ which concludes the proof.
\endproof

One might think that the reverse implication in Theorem B should hold as well, but it is not true in general. For example we have the inclusion $I^{(3)}(3,2) \subset I^{(5)}(3,3)$ and the inequality $s\cdot c \leq m\cdot d$ is obviously false in this case. This follows from Proposition \ref{tetrahedron symbolic} or can be checked directly with a symbolic algebra program.

\proofof{Theorem A}
We show first the implication from ii) to i). Let $x^a \in I^{(m)}(n,c)$. We prove that $x^a \in I^{r}(n,c)$. By Proposition \ref{prop:usual powers} it is enough to show that $a_0+\ldots+a_n \geq r(n-c+2)$.

Without loss of generality we may assume that $a_0\leq a_1\leq\ldots\leq a_n$. From the Proposition \ref{symbolic generators} we obtain
\begin{equation}
\label{proof_Th_A}
a_0+\ldots+a_{c-1} \geq m.
\end{equation}

We claim that $a_{c-1} \geq k$. If not, we get a chain of inequalities $a_0\leq a_1\leq\ldots\leq a_{c-1} \leq k-1$ and with (\ref{proof_Th_A}) we have
$$m \leq a_0+ a_1+\ldots+a_{c-1} \leq c(k-1)=kc-c< kc-p =m.$$
A contradiction.

Thus from $a_{c-1} \geq k$ we derive that
$$ a_0+\ldots+a_{c-1}+a_c+\ldots+a_n \geq m+(n-c+1)k=(n+1)k-p \geq r(n-c+2),$$
where the last inequality is the assumption ii). The claim $x^a \in I^r(n,c)$ follows now from Proposition \ref{prop:usual powers}.

Turning to the proof of the implication from i) to ii) we take $x^a$ such that
\[a=(
\underbrace{k-1,\ldots,k-1}_{\text{$p$}},\underbrace{k,\ldots,k}_{\text{$n-p+1$}})
.\]

It is easy to check that $ x^a \in I^{(m)}(n,c)$ by Proposition \ref{prop:usual powers}. Hence by the assumption i) $x^a \in I^{r}(n,c)$.

The sum of all exponents of $x^a$ is $$ a_0+\ldots+a_n=(n-p+1)k+p(k-1)=nk+k-p=(n+1)k-p.$$

So that by Proposition \ref{prop:usual powers} we obtain $$(n+1)k-p \geq r(n-c+2).$$

This shows ii).
\endproof

This result lets us find the resurgences of ideals $I(n,c)$.

\proofof{Theorem C}
Let $\rho=\rho(I(n,c))$. We show first the inequality $\rho \leq \frac{c(n-c+2)}{n+1}$. To this end let $m$ and $r$ be such that $I^{(m)}(n,c) \nsubseteq I^{r}(n,c)$. Theorem A implies then that
\begin{equation}
\label{res:one}
r > \frac{(n+1)k-p}{n-c+2}.
\end{equation}
From the inequality
\begin{equation*}
m=kc-p < kc - \frac{pc}{n+1} = \frac{c((n+1)k-p)}{n+1},
\end{equation*}
we obtain
\begin{equation}
\label{res:two}
\frac{m(n-c+2)}{(n+1)k-p} < \frac{c(n-c+2)}{n+1}.
\end{equation}

Combining (\ref{res:one}) and (\ref{res:two}) we get

$$\frac{m}{r} < \frac{m(n-c+2)}{(n+1)k-p} < \frac{c(n-c+2)}{n+1}.$$

Passing to the supremum on the left hand side we get $\rho \leq \frac{c(n-c+2)}{n+1}$.

Now, it suffices to show that there exists a particular sequence $(m_k,r_k)$ such that

$$ I^{(m_k)}(n,c) \nsubseteq I^{r_k}(n,c) \;\;\text{ and }\;\; \frac{m_k}{r_k} \underset{k \rightarrow \infty }{\longrightarrow}\frac{c(n-c+2)}{n+1}.$$

We take $m_k=kc$. Then for
$$r_k = \text{min}\Big\{s \in \mathbb{Z} : \;\; s > \frac{(n+1)k}{n-c+2} \Big\} $$
we have $I^{(m_k)}(n,c) \nsubseteq I^{r_k}(n,c)$ by Theorem A. The condition $$\frac{m_k}{r_k} \underset{k \rightarrow \infty }{\longrightarrow}\frac{c(n-c+2)}{n+1}$$ is now obvious.

\endproof

\bigskip
\paragraph*{\emph{Acknowledgement.}}
This research was carried out while the authors were visiting University of Freiburg as ERASMUS students. It is a pleasure to thank Institute of Mathematics in Freiburg for providing nice working conditions and the graduate school Graduiertenkolleg 1821 ``Cohomological Methods in Geometry'' for some additional financial support. We would like to thank Marcin Dumnicki and Tomasz Szemberg for inspiring discussions, helpful remarks and bringing reference \cite{BoHa10b} to our attention. We are greatly obliged to the anonymous referee for an extensive list of very helpful remarks which greatly improved the present note. We are also grateful to the Editor for his patience.

\bigskip \small

\bigskip
   Magdalena~Lampa-Baczy\'nska, Grzegorz~Malara,
   Instytut Matematyki UP,
   Podchor\c a\.zych 2,
   PL-30-084 Krak\'ow, Poland

\nopagebreak

  \textit{E-mail address:} \texttt{lampa.baczynska@wp.pl}

   \textit{E-mail address:} \texttt{gmalara@up.krakow.pl}

\bigskip

   Autors current address:
   Albert-Ludwigs-Universit\"at Freiburg,
   Mathematisches Institut, D-79104 Freiburg, Germany

\end{document}